\definecolor{olivegreen}{rgb}{0.14,0.29,0}
\newtheorem{exe}{Example}
\newtheorem{corol}{Corollary}
\newtheorem{ass}{Assumption}
\newtheorem{proper}{Property}
\newtheorem{defin}{Definition}
\newtheorem{prob}{Problem}
\newtheorem{cla}{Claim}
\newtheorem{rem}{Remark}
\newtheorem{lem}{Lemma}
\newtheorem{prop}{Proposition}
\newtheorem{thm}{Theorem}
\newtheorem{fct}{Fact}
\newenvironment{lemma}{\begin{lem}}{\hfill $\square$ \end{lem}}
\newenvironment{example}{\begin{exe}}{\hfill $\square$ \end{exe}}
\newenvironment{remark}{\begin{rem} \rm}{ \end{rem}}
\newenvironment{theorem}{\begin{thm}}{\hfill $\square$ \end{thm}}
\newif\ifitsdraft
\newtheorem{dwellt}{Condition}
\newif\ifitsdraft
\definecolor{cadmiumgreen}{rgb}{0.0, 0.42, 0.24}
\pgfplotsset{compat=1.16}
\tikzset{
    cross/.pic = {
    \draw[rotate = 45] (-#1,0) -- (#1,0);
    \draw[rotate = 45] (0,-#1) -- (0, #1);
    }
}
\newlength{\overwritelength}
\newlength{\minimumoverwritelength}
\newcommand{\overwrite}[3][red]{%
  \settowidth{\overwritelength}{$#2$}%
  \ifdim\overwritelength<\minimumoverwritelength%
    \setlength{\overwritelength}{\minimumoverwritelength}\fi%
  \stackrel
    {%
      \begin{minipage}{\overwritelength}%
        \color{#1}\centering\small #3\\%
        \rule{1pt}{9pt}%
      \end{minipage}}
    {\colorbox{#1!50}{\color{black}$\displaystyle#2$}}}
\tikzset{
  snakeline/.style = {->,thick, decorate, decoration={pre length=0.2cm, post length=0.2cm, snake, amplitude=.4mm, segment length=2mm}, cadmiumgreen},
  block/.style = {draw, fill=blue!20, minimum height=3em, minimum width=3em},
  pinstyle/.style={pin edge={to-,thin,black}},
}
\tikzset{cross/.style={cross out, draw=black, minimum size=2*(#1-\pgflinewidth), inner sep=0pt, outer sep=0pt},
cross/.default={1pt}}
\newcommand \EmptyArgParse [2]
\relax\detokenize{#2}\relax
      \def\ProcessedArgument{#1}%
      \def\ProcessedArgument{#2}%
\NewDocumentCommand \mathcircled { >{\EmptyArgParse{red}}O{} O{circle} m }
  {%
    \mathpalette{\mathcircled@b{#1}{#2}}{#3}%
  }
\newcommand\mathcircled@b[4]
\title{\LARGE \bf On the Intelligent Proportional Controller Applied to Linear Systems}
\author{M. C. Belhadjoudja, M. Maghenem, and E. Witrant
\thanks{The authors are with Universit\'e Grenoble Alpes, CNRS, Grenoble-INP, GIPSA-lab, F-38000, Grenoble, France (e-mail: mohamed.belhadjoudja@gipsa-lab.fr).   
}}
\begin{document}

\maketitle

\begin{abstract}
We analyze in this paper the effect of the well-known \textit{intelligent proportional controller} on the stability of linear control systems. Inspired by the literature on neutral time-delay systems and advanced-type systems, we derive sufficient conditions on the order of the control system, under which, the used controller fails to achieve exponential stability. Furthermore, we obtain conditions, relating the system's and the control parameters, such that the closed-loop system is either unstable or not exponentially stable. After that, we provide cases where the intelligent proportional controller achieves exponential stability. The obtained results are illustrated via numerical simulations, and on an experimental benchmark that consists of an electronic throttle valve.
\end{abstract}

\section{Introduction} \label{intro}
Model-free control (MFC) aims to regulate control systems with unknown dynamical equations. The MFC that we consider here has been introduced in \cite{fliess2009,fliess2013}; see also \cite{observer_MFC,adaptive_MFC} for more recent formulations. Generally speaking, this approach consists of relating the input and the output by an equation, known as \textit{the ultra-local form}, involving the output (and its time derivatives), the input, and an unknown function lumping whatever is unknown in the system \cite{fliess2009}. As a result, the control input is composed of two parts:  a first part designed to compensate for the unknown function, and a second part that consists of a classical linear controller, usually, a PID controller. The resulting controller is known as the \textit{intelligent PID} controller. This class of controllers has been tested both numerically and experimentally on different classes of systems, such as automotive engines \cite{app1},  automated vehicles \cite{app4} and fault accommodation in greenhouses \cite{app5}. This being said other types of MFC techniques are available in the literature; see \cite{ADRC}.

Due to its easy implementation, as opposed to more advanced control strategies, MFC using intelligent PIDs is increasingly applied. However, despite this growing popularity, the rigorous analysis of these controllers is still at its early stage, to the best of our knowledge. 
Indeed, the stability guarantees for the resulting closed-loop system remain, mostly, unexplored. Some results along this direction have been obtained, for example, in \cite{ubiquityPID}, where links between the sampled intelligent PID controller and the sampled classical PID controller in velocity form are established.  
In \cite{revisitMFC}, the robustness of intelligent PIDs is studied via sensitivity analysis. In \cite{stab_mfc_del}, the discretized closed loop using MFC is shown to coincide with the Euler forward approximation of a certain class of systems. The stability of the latter class of systems is then analyzed. However, these conclusions do not necessarily extend to the original closed-loop system under MFC. On the other hand, in some works, MFC and its intelligent linear controllers have been reinforced via different control techniques. For example, in \cite{MPC_MFC},  MFC is combined with model predictive control. In \cite{Sliding_MFC}, a controller combining MFC and sliding mode control is proposed. Despite their proven efficiency, these techniques are more complex to implement, as opposed to the intelligent PID controller.

In this paper, we prove the efficiency and show the limitations of the intelligent proportional controller (iP) when applied to linear control systems. That is, we prove that applying an intelligent proportional controller to a linear control system reduces to applying a PD controller to a neutral delay system. Hence, the intelligent proportional controller inherits some of the limitations of the classical PD controller. 
More precisely, we derive sufficient conditions on the order of the system, under which, the origin fails to be exponentially stable. Furthermore, we derive sufficient conditions, on the system's parameters and the control gains, under which, the origin is either unstable or fails to be exponentially stable. Then, based on existing results on neutral delay systems, we derive sufficient conditions for exponential stability, which illustrates situations where the iP controller guarantees better results than just asymptotic stability. We illustrate our theoretical results via numerical examples and via an experimental benchmark.  In the latter, we solve the the angle-tracking problem for an electronic throttle valve. 

The paper is organized as follows. The problem statement is in Section \ref{problem_statement}. 
Then, some preliminary results are presented in Section \ref{preliminaries}. Our main results are in Section \ref{main_results}. Numerical examples are in Section \ref{numerical_examples}. Finally, the experimental results are in Section \ref{experiments}. 

\textbf{Notation.} 
We denote by $\mathbb{R}^n$ the set of $n$-uples of real numbers, by $\mathbb{R}_{>0}$ the set of positive real numbers and by $\mathbb{R}_{\geq 0}$ the set of nonnegative real numbers. We let $\mathbb{N} :=\lbrace 0,1,2,...\rbrace$, $\mathcal{Z} := \{ 0, \pm 1, \pm 2, \hdots \}$, and $\mathbb{C}$ be the set of complex numbers. Given $\tau \in \mathbb{R}_{>0}$ and a time-varying function $y$, we write $y_{\tau}(t) := y(t-\tau)$. Given $a\in \mathbb{N}$, we denote the $a^{th}$ derivative of $y$ by $y^{(a)}$, the first derivative by $\dot{y}$, and the second derivative by $\ddot{y}$. Given a matrix $A\in \mathbb{R}^{n\times n}$, we denote by $||A||$ its $2$-norm, by $s(A)$ its spectral abscissa, by $\rho (A)$ its spectral radius and by $\mu (A)$ its logarithmic norm with respect to $||.||$. We denote by $I_{n}$ the identity matrix of dimension $n$, and by $0_{nm}$ the zero matrix of dimension 
$n \times m$.

\begin{figure}
\centering
\includegraphics[scale = 0.9]{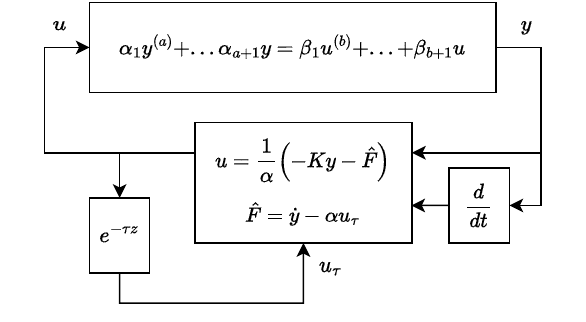}
\caption{The iP control algorithm applied to system \eqref{eq.1}.}
\label{fig1}
\end{figure}

\section{Problem Statement}\label{problem_statement}

We consider linear control systems of the form  
\begin{align} \label{eq.1}
\alpha_{1}y^{(a)}+\hdots +\alpha_{a+1}y=\beta_{1}u^{(b)}+\hdots +\beta_{b+1}u, 
\end{align}
where $y\in \mathbb{R}$ is the measured output, $u\in \mathbb{R}$ is the control input, $a,b\in \mathbb{N}$ with $a>b$, $\alpha_{i},\beta_{j}\in \mathbb{R}$ for $(i,j)\in \lbrace 1,2,\hdots a+1\rbrace \times \lbrace 1,2,\hdots ,b+1\rbrace$, and $\alpha_{1} \beta_1 \neq 0$.

We recall in this section the structure of the intelligent proportional controller applied to system \eqref{eq.1}. For a detailed presentation of MFC approaches with various examples, we refer the reader to \cite{fliess2009} and \cite{fliess2013}. 

Consider the control system in \eqref{eq.1} and let us define, along the trajectories of the system, the time-varying function: 
\begin{align}
    F(t) := \dot{y}(t)-\alpha u(t). 
\end{align}
where $\alpha \neq 0$ is a design parameter. The trajectories of  \eqref{eq.1} can, therefore,  be described by the \textit{ultra-local form} \cite{fliess2009}:
\begin{align}
\dot{y}(t) = \alpha u(t) + F(t). 
\end{align}
The intelligent proportional control law is  given by 
\begin{align} \label{eq.9}
u(t) := \frac{1}{\alpha}\left(-Ky(t)-\hat{F}(t)\right),
\end{align}
where $\hat{F}$ is an estimate of $F$, and $K\in \mathbb{R}$ is a control gain. 

The key idea behind the intelligent proportional control law \eqref{eq.9} is that if $\hat{F} \equiv F$, then the closed-loop system is governed by the equation $\dot{y}=-Ky$, which leads to exponential stability if and only if $K>0$. Various estimates of $F$ are proposed in the literature; 
see for e.g. \cite{fliess2009,fliess2013,
observer_MFC,adaptive_MFC}. 
The one we consider here is given by 
\begin{align}  \label{eq.10}
\hat{F}(t) = \dot{y}(t)-\alpha u_{\tau}(t), \qquad u_{\tau}(t) := u (t-\tau),  
\end{align}
where $\tau \in \mathbb{R}_{>0}$ 
is a time delay. 

\begin{remark}
When $\dot{y}$ is not available for measurements, various methods to approximate it can be found in \cite{derivative1,derivative2,derivative3,fliess2013}.
Although we assume here that $\dot{y}$ is perfectly known, the analysis using its approximations is an interesting perspective to this work.
\end{remark}

The structure of such a controller is illustrated in the block diagram of Figure \ref{fig1}.

Clearly, when applying \eqref{eq.9}-\eqref{eq.10} to \eqref{eq.1}, the resulting closed-loop system is not governed by  $\dot{y}=- K y$. It will most likely involve a time delay (except for specific cases; see Remark \ref{rem1}). The effect of this delay cannot be ignored in general, even if it is sufficiently small; see \cite{smalldelay1,smalldelay2}.
Furthermore, when the closed-loop system has a delay term, we will show that it is either a neutral delay system or an advanced-type system.  As a consequence, we will be able to derive stability and instability guarantees based on some well-known results concerning the latter two classes of systems.

Before presenting our main results, we will recall some preliminaries in the next section.

\section{Preliminaries}\label{preliminaries}

\subsection{Neutral Delay Systems}

Consider the neutral delay system of the form 
\begin{align} \label{eq.2-} 
\alpha_{1}y^{(a)}+\hdots + \alpha_{a+1}y= \beta_{1} y^{(a)}_\tau +\hdots +\beta_{a+1} y_\tau,
\end{align}
for some $a \in \mathbb{N}$ and  $\alpha_{i}, \beta_i \in \mathbb{R}$ for all $i \in \lbrace 1,2,\hdots a+1 \rbrace$ with 
$\alpha_1 \beta_1 \neq 0$. 
Results on well-posedness of system \eqref{eq.2-}, i.e. existence and uniqueness of solutions, can be found in \cite[Chapter 1]{fridman}, \cite[Chapter 9]{haledelay}.

System \eqref{eq.2-} admits the characteristic equation:
\begin{equation}
\label{eq.5} 
\begin{aligned}
0 & = \alpha_1 z^a + \alpha_{2}  z^{a-1} + \hdots + \alpha_{a+1} 
\\ & \qquad - e^{-\tau z} \left(\beta_1 z^a + \beta_2  z^{a-1} + \hdots + \beta_{a+1} \right).  
\end{aligned}
\end{equation}
\begin{lemma}[Neutral root chains {\cite[Item II. Theorem 1]{roots_locali}}] \label{lem1} 
If $\beta_1 \alpha_1 \neq 0$, then equation \eqref{eq.5} has an infinite number of roots given by the sequence:
\begin{equation}
\label{lem1_eq}
\begin{aligned}
z_{k} & = \frac{1}{\tau}\left( \log \left(\bigg| \frac{\beta_1}{\alpha_1} \bigg| \right) + i\left( \text{arg} \left( \frac{\beta_1}{\alpha_1} \right) + 2k\pi \right) \right) 
+ g_k 
\\ & \qquad \qquad \forall k \in \mathcal{Z}, 
\end{aligned}
\end{equation}
where $g_k = o\left(1\right)$; namely, for any $c> 0$, there exists $k_c >0$ such that $|g_k| \leq c$ for all $k \in \mathcal{Z}$ such that $|k| \geq k_c$.

Moreover, besides the sequence $\{z_k \}$, equation \eqref{eq.5} has a finite number of other roots. 
\end{lemma}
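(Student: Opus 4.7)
The plan is to isolate the dominant balance of \eqref{eq.5} at infinity and then perturb. Writing
\[
P(z) := \alpha_1 z^a + \alpha_2 z^{a-1} + \cdots + \alpha_{a+1}, \qquad Q(z) := \beta_1 z^a + \beta_2 z^{a-1} + \cdots + \beta_{a+1},
\]
the characteristic equation becomes $P(z) = e^{-\tau z} Q(z)$. Since $\alpha_1 \beta_1 \neq 0$, the ratio $P(z)/Q(z)$ is holomorphic and nonzero outside some disk, and
\[
\frac{P(z)}{Q(z)} = \frac{\alpha_1}{\beta_1}\left(1 + O(1/z)\right) \quad \text{as } |z|\to \infty.
\]
At the unperturbed level we therefore solve $e^{-\tau z} = \alpha_1/\beta_1$, whose solutions are exactly the points $z_k^{(0)} := \tfrac{1}{\tau}\bigl(\log|\beta_1/\alpha_1| + i(\arg(\beta_1/\alpha_1) + 2k\pi)\bigr)$ appearing in \eqref{lem1_eq}; all $z_k^{(0)}$ lie on the vertical line $\operatorname{Re} z = \tfrac{1}{\tau}\log|\beta_1/\alpha_1|$ and are spaced by $2\pi/\tau$ along the imaginary direction.

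Second, I would localize an actual root of \eqref{eq.5} near each $z_k^{(0)}$ for $|k|$ large via Rouché's theorem. Set
\[
H(z) := P(z) - e^{-\tau z} Q(z), \qquad H_0(z) := \alpha_1 z^a\!\left(1 - \tfrac{\beta_1}{\alpha_1} e^{-\tau z}\right),
\]
so the zeros of $H_0$ in the relevant region are precisely the $z_k^{(0)}$, each simple. On a circle $\partial D_k$ of radius $r_k \downarrow 0$ around $z_k^{(0)}$, Taylor expansion at the simple zero gives a lower bound $|H_0(z)| \geq c\, r_k |z_k^{(0)}|^a$ for some $c>0$ independent of $k$, while the bound $P/Q - \alpha_1/\beta_1 = O(1/z)$ yields $|H(z)-H_0(z)| \leq C |z_k^{(0)}|^{a-1}$. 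Choosing $r_k$ so that $r_k |z_k^{(0)}| \to \infty$ but $r_k \to 0$ (e.g.\ $r_k = |z_k^{(0)}|^{-1/2}$) makes $|H-H_0| < |H_0|$ on $\partial D_k$, so by Rouché there is exactly one root $z_k$ of \eqref{eq.5} in $D_k$, and $g_k := z_k - z_k^{(0)} \to 0$, which is the claim.

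Third, I would show that only finitely many roots lie \emph{outside} the union of these small disks. Outside a large ball, the equation $P=e^{-\tau z}Q$ forces $|e^{-\tau z}|$ to be close to $|\alpha_1/\beta_1|$, hence $\operatorname{Re} z$ to be close to $\tfrac{1}{\tau}\log|\beta_1/\alpha_1|$; in the complementary half-planes, one side strictly dominates the other for $|z|$ large, ruling out roots. Combined with the Rouché count inside each $D_k$, this accounts for all roots with $|z|$ sufficiently large, leaving only a finite set of exceptional roots (they can only accumulate at infinity along the chain, which they don't by construction).

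The main obstacle is the uniform Rouché comparison: one needs quantitative control of $|H_0|$ from below on shrinking circles around $z_k^{(0)}$ (coming from simplicity of the zeros of $1-(\beta_1/\alpha_1)e^{-\tau z}$) together with the uniform-in-$k$ decay of $P/Q - \alpha_1/\beta_1$. Everything else — solving the unperturbed equation and ruling out stray roots at infinity — is straightforward once the balance is identified.
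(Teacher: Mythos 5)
The paper does not prove this lemma at all: it is imported verbatim from the cited reference \cite[Item II, Theorem 1]{roots_locali} (the classical Bellman--Cooke--type root-chain localization for exponential polynomials), so there is no in-paper argument to compare against. Your sketch reconstructs the standard proof of that classical result and is essentially sound: the dominant balance $e^{-\tau z}=P(z)/Q(z)=(\alpha_1/\beta_1)(1+O(1/z))$ correctly produces the unperturbed chain $z_k^{(0)}$, and the Rouch\'e comparison between $H$ and $H_0(z)=\alpha_1 z^a\bigl(1-(\beta_1/\alpha_1)e^{-\tau z}\bigr)$ on circles of radius $r_k=|z_k^{(0)}|^{-1/2}$ does close, since $r_k|z_k^{(0)}|^a \gg |z_k^{(0)}|^{a-1}$. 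Two points deserve explicit justification if you write this out. First, the uniform-in-$k$ lower bound $|1-(\beta_1/\alpha_1)e^{-\tau z}|\ge c\,|z-z_k^{(0)}|$ on small disks is immediate from the $2\pi i/\tau$-periodicity of that factor (all $z_k^{(0)}$ are translates of one another), together with simplicity of the zero (the derivative there equals $\tau\neq 0$); you identify this as the main obstacle, but periodicity disposes of it cleanly. Second, the ``finitely many other roots'' step is stated too loosely: after confining all large roots to a vertical strip by the half-plane domination argument, you still must show that inside the strip but \emph{outside} $\bigcup_k D_k$ there are no large roots; this follows from the periodic lower bound $|1-(\beta_1/\alpha_1)e^{-\tau z}|\ge c\min\bigl(1,\mathrm{dist}(z,\{z_k^{(0)}\})\bigr)$, which gives $|H_0(z)|\ge c\,|z|^{a-1/2}$ there, again dominating $|H-H_0|=O(|z|^{a-1})$; the remaining roots then lie in a compact set, where an entire function not identically zero has finitely many zeros. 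The parenthetical about roots ``accumulating at infinity along the chain'' is not the right framing and should be replaced by this strip-plus-compactness argument.
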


\subsection{Advanced-Type Systems}

Systems of \textit{advanced type} are of the form 
\begin{align} \label{eq.2--} 
\alpha_{1}y^{(a)}+\hdots + \alpha_{a+1}y= \beta_{1} y^{(b)}_\tau +\hdots +\beta_{b+1} y_\tau,
\end{align}
for some $a, b \in \mathbb{N}$, with $b > a$, $\alpha_{i} \in \mathbb{R}$ for all $i \in \lbrace 1,2,\hdots a+1 \rbrace$, and $\beta_i \in \mathbb{R}$ for all $i \in \lbrace 1,2,\hdots b+1 \rbrace$, with $\alpha_1 \beta_1 \neq 0$. 

System \eqref{eq.2--} admits a characteristic equation of the form
\begin{equation}
\label{eq.6}  
\begin{aligned}
0 =&~ \alpha_1 z^a + \alpha_{2}  z^{a-1} + \hdots + \alpha_{a+1} 
\\ &~ - e^{-\tau z} \left( \beta_1 z^b +  \beta_2  z^{b-1} + \hdots + \beta_{b+1} \right). 
\end{aligned}
\end{equation}

\begin{lemma}[Unstable root chains {\cite[Item III. Theorem 1]{roots_locali}}]\label{lem2}
If $b>a$ and $\alpha_1 \beta_1 \neq 0$, then the number of roots of \eqref{eq.6} with negative real part is finite. Additionally, \eqref{eq.6} has an infinite number of roots with arbitrarily large non-negative real parts. 
\end{lemma}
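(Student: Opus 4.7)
The plan is to analyse the characteristic equation \eqref{eq.6} by writing it as $h(z) := \alpha(z) - e^{-\tau z}\beta(z) = 0$, where
\begin{equation*}
\alpha(z) := \alpha_1 z^a + \cdots + \alpha_{a+1}, \qquad \beta(z) := \beta_1 z^b + \cdots + \beta_{b+1},
\end{equation*}
so that $\deg\beta = b > a = \deg\alpha$ and $\alpha_1\beta_1 \neq 0$. Outside the zeros of $\alpha$ the equation is equivalent to $e^{\tau z} = \beta(z)/\alpha(z)$, and the right-hand side grows like $|\beta_1/\alpha_1|\,|z|^{b-a}$ at infinity, whereas $|e^{\tau z}|$ is bounded by $1$ whenever $\Re(z)\leq 0$. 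This asymmetry drives both conclusions.

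For the finiteness claim, I would first exploit the bound just described: there exists $R>0$ such that $|e^{\tau z}| < |\beta(z)/\alpha(z)|$ for every $z$ with $\Re(z)\leq 0$ and $|z|\geq R$, which rules out any root of \eqref{eq.6} in that region. The remaining set $\{z : \Re(z)\leq 0,\ |z|<R\}$ is bounded, and the entire function $h$ has only finitely many zeros in any bounded set, giving finitely many roots with non-positive real part, a fortiori with negative real part.

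For the infinite chain with arbitrarily large non-negative real parts, I would run an asymptotic inversion. Any root $z$ of large modulus must satisfy $e^{-\tau z}\approx (\alpha_1/\beta_1)\,z^{a-b}$; taking logarithms with a branch index $k\in\mathcal{Z}$ yields the fixed-point equation
\begin{equation*}
\tau z = \log(\beta_1/\alpha_1) + (b-a)\log z - 2\pi i k.
\end{equation*}
For $|k|$ large this can be solved by iteration and produces a nominal position
\begin{equation*}
z_k^{(0)} = -\frac{2\pi i k}{\tau} + \frac{b-a}{\tau}\log\!\left(\frac{2\pi|k|}{\tau}\right) + O(1),
\end{equation*}
whose real part tends to $+\infty$ with $|k|$. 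I would then apply Rouché's theorem on a small disk around each $z_k^{(0)}$, comparing $h$ with its dominant summand $-e^{-\tau z}\beta(z)$, to extract a genuine zero of $h$ near $z_k^{(0)}$ for every sufficiently large $|k|$.

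The hard part, as I see it, is the Rouché verification: because $\Re(z_k^{(0)})$ grows only logarithmically in $|k|$, the polynomial term $\alpha(z)$ and the factor $e^{-\tau z}\beta(z)$ have comparable polynomial magnitudes on the candidate contour, so the domination must come from the $2\pi$-winding of $\arg(e^{-\tau z})$ along a carefully tuned circle rather than a crude size estimate. Since Lemma \ref{lem2} is borrowed from \cite{roots_locali}, an alternative is to bypass this calculation and appeal to the Pontryagin/Bellman--Cooke classification of zeros of exponential polynomials, which already encodes exactly the root distribution asserted here.
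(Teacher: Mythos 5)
The paper does not prove this lemma at all: it is imported verbatim from the cited reference (Item III of Theorem 1 in \cite{roots_locali}), so any argument you supply is by construction a different route. Your treatment of the first claim is complete and correct as stated: for $\Re z\leq 0$ one has $|e^{\tau z}|\leq 1$ while $|\beta(z)/\alpha(z)|\to\infty$ because $b>a$ and $\alpha_1\beta_1\neq 0$, so all roots with non-positive real part lie in a compact set, where the (not identically zero) entire function $h$ has finitely many zeros. For the second claim your asymptotic inversion gives the right nominal chain, $\Re z_k\sim \frac{b-a}{\tau}\log|k|\to+\infty$, and you correctly diagnose why a naive Rouch\'e comparison fails: on the candidate contour $|e^{-\tau z}\beta(z)|\asymp |k|^{b-(b-a)}=|k|^{a}\asymp|\alpha(z)|$, so neither summand dominates. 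The standard repair, which would close your sketch, is the logarithmic change of variables $w:=\tau z+\log\bigl(\alpha(z)/\beta(z)\bigr)$ (well defined for $|z|$ large since $\alpha,\beta$ have no large zeros): roots correspond to $w=2\pi i k$, and since $w'(z)=\tau+O(1/z)$ is bounded away from zero, Rouch\'e applied to $e^{w}-1$ against an $o(1)$ perturbation on small circles $|w-2\pi ik|=\delta$ yields exactly one root per large $|k|$. This is precisely the Bellman--Cooke/P\'olya distribution-diagram argument you point to, so deferring the details to the cited classification is consistent with what the paper itself does; your sketch adds a self-contained and correct proof of the finiteness half, which the citation-only statement does not display.
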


\subsection{Neutral Delay Control Systems Subject to PD Control}

We introduce the particular class of neutral delay control systems of the form
\begin{align} \label{eq.2}
  \alpha_{1}\left(y^{(a)}-y_{\tau}^{(a)}\right)+\hdots +\alpha_{a+1}\left(y-y_{\tau}\right) = v, 
\end{align}
  where $v \in \mathbb{R}$ is the control input, $a \in \mathbb{N}$, $\alpha_{i} \in \mathbb{R}$ for $i \in \lbrace 1,2,\hdots a+1\rbrace$, and $\alpha_{1}\neq 0$. 
\ifitsdraft
The characteristic equation of  system \eqref{eq.2}, when $v = 0$, is given by 
the \textit{exponential polynomial} of the form 
\begin{equation}
\label{eq.3}
\begin{aligned}
F(z) & := z^a + \mathcal{K}_1  z^{a-1} + \hdots + \mathcal{K}_a 
\\ & - e^{-\tau z} \left(z^a + \mathcal{K}_1  z^{a-1} + \hdots + \mathcal{K}_a \right),  
\end{aligned}
\end{equation}
where $z \in \mathbb{C}$, $\tau \in \mathbb{R}_{>0}$ is the delay variable,  and $\mathcal{K}_i, i \in \{1,...,a\}$,  are constants depending on the system's parameters.
\fi

System \eqref{eq.2} is said to be (asymptotically, exponentially) stabilizable by PD control of order $(b+1)$ if there exist gains $K_{d1},K_{d2},\hdots,K_{d(b+2)} \in \mathbb{R}$, with $K_{d1}\neq 0$, such that the control law 
\begin{align} \label{eq.4}
v := K_{d1}y^{(b+1)}+K_{d2}y^{(b)}+\hdots+K_{d(b+2)}y 
\end{align}
renders the origin of the resulting closed-loop system (asymptotically, exponentially) stable. We recall here that the closed-loop system is exponentially stable if there exist constants $\kappa, \sigma >0$ such that for any initialization $y([-\tau,0])$, we have 
\begin{align}
|y(t)|\leq \left(\sup_{s\in [-\tau, 0]}|y(s)|\right)\kappa e^{-\sigma t} \quad t\geq 0.
\end{align}

The following two facts are in order. 

\begin{itemize}
\item When $a > b+1$ or $\alpha_1 \neq  K_{d1}$, system \eqref{eq.2} in closed loop with the PD controller \eqref{eq.4} is of neutral type \eqref{eq.2-}. 

\item When system \eqref{eq.2} in closed loop with \eqref{eq.4} is such that 
\begin{align}
a = b+1, \quad
\alpha_1 = K_{d1}, \nonumber
\\
 \exists i\in \lbrace 2,3,...,a+1\rbrace : \alpha_{i}\neq K_{di}, \label{eqad}
\end{align}
the closed-loop system is of advanced type \eqref{eq.2--}. 
\end{itemize}

\section{Main Results} \label{main_results}

We start showing that the closed loop of \eqref{eq.1} using
\eqref{eq.9}-\eqref{eq.10} has the form of \eqref{eq.2} subject to a PD control law in the form of \eqref{eq.4}. As a result, depending on the degree of the control system \eqref{eq.1}, its parameters, and the parameters $(\alpha, K)$ of the intelligent proportional controller, the closed-loop system is either a neutral delay system of the form \eqref{eq.2-} or an advanced-type system of the form \eqref{eq.2--}. In particular, we use Lemmas \ref{lem1} and \ref{lem2} to derive conditions, under which, the closed-loop system either fails to be exponentially stable or is unstable.    

\begin{remark} 
It is well known that if $|\beta_1/\alpha_1| < 1$ in \eqref{eq.2-}, then asymptotic stability is equivalent to exponential stability \cite{kharitonov2005exponential}.
However, this is not true in general. For example, the origin of $\dot{y}-\dot{y}_{\tau} = -y$ is not exponentially stable but it is still asymptotically stable. Therefore, the fact that the intelligent proportional controller fails to guarantee exponential stability in some scenarios does not mean that it cannot achieve asymptotic stability. 
\end{remark}

\subsection{Equivalence between the iP Control of \eqref{eq.1} and the PD Control of \eqref{eq.2}}

We start introducing the following lemma.  

\begin{lemma}\label{thm1}
    A strictly causal system \eqref{eq.1} (with $a > b$) in closed loop with the intelligent proportional controller in \eqref{eq.9}-\eqref{eq.10}  
    can be expressed as a neutral  delay system \eqref{eq.2}, of the same order $a$, subject to a PD controller in \eqref{eq.4} of order $(b+1)$, whose gains $K_{di}$, $i \in \lbrace 1,2,\hdots ,b+2\rbrace$, satisfy
    \begin{align}
        K_{d1} & = -\frac{1}{\alpha}\beta_{1}, \ \ K_{d(b+2)} = -\frac{1}{\alpha}\beta_{b+1}K, ~ \text{and} \nonumber \\ 
        K_{dj} & = -\frac{1}{\alpha}\left(\beta_{j-1} K + \beta_{j} \right) \quad  \forall j \in 
        \{2, \hdots, b+1\}. \label{eq.11} 
  \end{align}
\end{lemma}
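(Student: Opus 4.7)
The plan is to turn the iP control law into a relation between $u(t)-u_\tau(t)$ and $y$, and then eliminate $u$ from the plant equation by forming the difference of \eqref{eq.1} at times $t$ and $t-\tau$. Specifically, combining \eqref{eq.9} with the estimator \eqref{eq.10} I get
\begin{align*}
\alpha u(t) = -Ky(t) - \dot{y}(t) + \alpha u_\tau(t),
\end{align*}
so that $\alpha\bigl(u(t)-u_\tau(t)\bigr) = -Ky(t)-\dot{y}(t)$. This is the single identity that encodes the controller; differentiating it $j$ times yields $u^{(j)}(t)-u_\tau^{(j)}(t) = -\tfrac{1}{\alpha}\bigl(K y^{(j)}(t) + y^{(j+1)}(t)\bigr)$ for $j=0,1,\dots,b$, which is the only place smoothness of $u$ enters (and it is automatic once the closed-loop $y$ is assumed sufficiently smooth).

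Next I would write \eqref{eq.1} at $t$ and at $t-\tau$ and subtract. The left-hand side becomes exactly $\alpha_{1}(y^{(a)}-y_\tau^{(a)}) + \dots + \alpha_{a+1}(y-y_\tau)$, matching the left-hand side of \eqref{eq.2}, while the right-hand side becomes
\begin{align*}
v := \sum_{i=1}^{b+1}\beta_i\bigl(u^{(b+1-i)}(t)-u_\tau^{(b+1-i)}(t)\bigr).
\end{align*}
Substituting the differentiated controller identity into every term of this sum expresses $v$ purely in terms of $y,\dot y,\dots,y^{(b+1)}$:
\begin{align*}
v = -\frac{1}{\alpha}\sum_{i=1}^{b+1}\beta_i\bigl(Ky^{(b+1-i)}(t)+y^{(b+2-i)}(t)\bigr).
\end{align*}

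Finally I would collect terms by derivative order and identify the result with the PD control law \eqref{eq.4}. The highest-order term $y^{(b+1)}$ receives a contribution only from $i=1$, giving $K_{d1}=-\beta_1/\alpha$ (in particular $K_{d1}\neq 0$ since $\beta_1\neq 0$); the lowest-order term $y$ receives a contribution only from $i=b+1$ via the $Ky^{(b+1-i)}$ piece, giving $K_{d(b+2)}=-\beta_{b+1}K/\alpha$; every intermediate derivative $y^{(b+2-j)}$ with $j\in\{2,\dots,b+1\}$ receives one contribution from the $y^{(b+2-i)}$ part (at $i=j-1$, giving $-\beta_{j-1}K/\alpha$) and one from the $Ky^{(b+1-i)}$ part (at $i=j$, giving $-\beta_j/\alpha$), yielding precisely \eqref{eq.11}. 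The main obstacle is bookkeeping rather than mathematical content: one has to line up the two index shifts (one from the derivative order in the $u$--sum, one from the shift induced by the differentiation of the controller identity) so that the claimed pairing $(\beta_{j-1}K,\beta_j)$ comes out cleanly for every interior $j$, and check the boundary cases $j=1$ and $j=b+2$ separately.
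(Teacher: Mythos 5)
Your proposal is correct and follows essentially the same route as the paper: both hinge on the identity $u-u_\tau=-\tfrac{1}{\alpha}(Ky+\dot y)$ obtained from \eqref{eq.9}--\eqref{eq.10}, use the plant equation at $t-\tau$ to eliminate the delayed input terms, and then collect coefficients to read off \eqref{eq.11}; whether one subtracts the two plant equations first (as you do) or substitutes the controller into the right-hand side first (as the paper does) is only a reordering of the same algebra. One cosmetic slip: in your final bookkeeping the two source labels are swapped --- the term $-\beta_{j-1}K/\alpha$ comes from the $Ky^{(b+1-i)}$ part at $i=j-1$ and $-\beta_j/\alpha$ from the $y^{(b+2-i)}$ part at $i=j$ --- but the resulting gains match \eqref{eq.11} exactly.
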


\begin{proof}
Using \eqref{eq.9}, we obtain
\begin{align}
&\beta_{1}u^{(b)}+\hdots  + \beta_{b+1}u  = \beta_{1}u_{\tau}^{(b)}+\hdots +\beta_{b+1}u_{\tau} \nonumber\\
&~ -\frac{1}{\alpha}\beta_{1}\left(Ky^{(b)}+y^{(b+1)}\right) -\frac{1}{\alpha}\beta_{2}\left(Ky^{(b-1)}+y^{(b)}\right) \nonumber \\
&~ - \hdots  -\frac{1}{\alpha}\beta_{b+1}\left(Ky+\dot{y}\right), 
\end{align}
which can be rewritten as 
\begin{align}
&\beta_{1}u^{(b)}  + \hdots +\beta_{b+1}u = \nonumber \\ & \alpha_{1}y_{\tau}^{(a)}+\hdots +\alpha_{a+1}y_{\tau}
 -\frac{1}{\alpha}\beta_{1}y^{(b+1)}-\frac{1}{\alpha}\beta_{b+1}Ky \nonumber \\
&~ -\frac{1}{\alpha}\left(\beta_{1}K+\beta_{2}\right)y^{(b)} -\hdots -\frac{1}{\alpha}\left(\beta_{b}K+\beta_{b+1}\right)\dot{y}.
\end{align}    
The closed-loop system is therefore given by 
\begin{align}
& \alpha_{1}\left(y^{(a)}-y_{\tau}^{(a)}\right)+\hdots +\alpha_{a+1}\left(y-y_{\tau}\right) = \nonumber \\
&~-\frac{1}{\alpha}\beta_{1}y^{(b+1)}-\frac{1}{\alpha}\beta_{b+1}Ky \nonumber \\
&~-\frac{1}{\alpha}\left(\beta_{1}K+\beta_{2}\right)y^{(b)} - \hdots -\frac{1}{\alpha}\left(\beta_{b}K+\beta_{b+1}\right)\dot{y},
\end{align}
which is the structure of 
\eqref{eq.2} subject to a PD controller \eqref{eq.4} of order $(b+1)$. 
\end{proof}

\begin{remark} \label{rem1}
From Lemma \ref{thm1}, we conclude that the strictly causal control system in \eqref{eq.1} subject to the iP control in \eqref{eq.9}-\eqref{eq.10} is under one of the following three forms:

\begin{itemize}
\item Advanced-type form \eqref{eq.2--}: if there exist $\{K_{di}\}_{i \in \{1,...,b+2\}}$, such that 
\eqref{eqad}-\eqref{eq.11} hold. 
\item Undelayed form: if there exist $\{ K_{di} \}_{i \in \{1,...,b+2\}}$, with $a=b+1$, \eqref{eq.11} holds,  and $\alpha_{i}=K_{di}$ for all $i \in \{1,2,...,a+1\}$. 
\item Neutral delayed form \eqref{eq.2-}: when none of the conditions in the previous two items hold.
\end{itemize}
\end{remark}

\subsection{Instability and Lack of Exponential Stability}
According to Lemma \ref{thm1}, the iP controller applied to  \eqref{eq.1} inherits the following limitations of the PD controller in \eqref{eq.4} applied to  \eqref{eq.2}. 

\begin{itemize}
\item The PD controller in \eqref{eq.4} cannot guarantee exponential stability if $a > b+1$.

\item When $a = b + 1$, the resulting closed-loop system can be of advanced type, and thus unstable. 

\item Even when the closed-loop system is not of advanced type, when the coefficient of $y^{(a)}_\tau$ is, in norm, larger than one, then the closed-loop system is unstable.  

\item  When the latter coefficient is equal to $1$, the closed-loop system fails to be exponentially stable. 
\end{itemize}

The aforementioned facts are applied to assess the instability of \eqref{eq.1} in closed loop with  \eqref{eq.9}-\eqref{eq.10}.

\begin{theorem}\label{thm2}
Consider system \eqref{eq.1}, with $a > b$, in closed loop with the intelligent proportional controller in \eqref{eq.9}-\eqref{eq.10}. Then, for any delay $\tau > 0$, the following properties hold: 

\begin{itemize}
    \item If $a>(b+1)$, then, for any $\alpha \in \mathbb{R}^*$ and $K \in \mathbb{R}$, the origin of the closed-loop system is not exponentially stable. 

    
    \item If $a=(b+1)$ and $\alpha = -\beta_{1}/\alpha_{1}$, then for any $K\in \mathbb{R}$ such that $\alpha_{a+1} \alpha \neq -  \beta_{b+1}K$ or $\alpha_{i} \alpha \neq -  (\beta_{i-1} K + \beta_i)$ for some $i\in \lbrace 2,3,...,b+1\rbrace $,  the origin of the closed-loop system is unstable. 
   
    \item If $a=(b+1)$ and $\alpha \neq -\beta_{1}/\alpha_{1}$, then, for any $K \in \mathbb{R}$,  the origin of the closed-loop system is unstable if 
    \begin{align} \label{eqthmblast}
    \left|\frac{\alpha_{1}}{\alpha_{1}+\beta_{1}/\alpha}\right|>1, 
\end{align}
and not exponentially stable if
\begin{align} \label{eqthmlast}
    \left|\frac{\alpha_{1}}{\alpha_{1}+\beta_{1}/\alpha }\right|=1.  
\end{align} 
\end{itemize}
\end{theorem}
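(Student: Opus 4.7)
The plan is to apply Lemma \ref{thm1} to rewrite the closed loop as a neutral delay equation subject to a PD controller, inspect the leading coefficients of the resulting characteristic equation, and then invoke Lemma \ref{lem1} or Lemma \ref{lem2} case by case.

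For $a>b+1$: the PD terms of \eqref{eq.11} have order at most $b+1<a$, so the coefficient of $y^{(a)}$ on the undelayed side of the closed loop remains $\alpha_1$, while $y_\tau^{(a)}$ also carries $\alpha_1$. The characteristic equation is thus of the form \eqref{eq.5} with the ratio $|\beta_1/\alpha_1|=1$ in the notation of Lemma \ref{lem1}, so the root chain \eqref{lem1_eq} has real parts $\mathrm{Re}(g_k)\to 0$. Since exponential stability of the neutral closed loop requires $\sup_k \mathrm{Re}(z_k) < 0$, it fails.

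For $a=b+1$ and $\alpha=-\beta_1/\alpha_1$: \eqref{eq.11} gives $K_{d1}=\alpha_1$, cancelling the $y^{(a)}$ term on the undelayed side. The disjunction in the hypothesis ensures that at least one undelayed coefficient of order $\leq a-1$ survives, while $y_\tau^{(a)}$ still has coefficient $\alpha_1\neq 0$; hence the equation is of advanced type \eqref{eq.2--}, and Lemma \ref{lem2} produces infinitely many roots with arbitrarily large non-negative real part, yielding instability. For $a=b+1$ and $\alpha\neq -\beta_1/\alpha_1$: the undelayed $y^{(a)}$ coefficient is $\alpha_1+\beta_1/\alpha\neq 0$, while $y_\tau^{(a)}$ still carries $\alpha_1$. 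After normalizing the leading undelayed coefficient, Lemma \ref{lem1} applies with ratio $|\alpha_1/(\alpha_1+\beta_1/\alpha)|$. Under \eqref{eqthmblast} the logarithm in \eqref{lem1_eq} is positive, so infinitely many roots drift into the open right half plane and instability follows; under \eqref{eqthmlast} the logarithm vanishes, the real parts of the root chain accumulate at $0$, and exponential stability fails.

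The main hurdle will be the coefficient bookkeeping: one must carefully identify from \eqref{eq.11} which coefficients survive in the rewritten closed loop in each subcase, check that the hypothesis $\alpha_1\beta_1\neq 0$ of Lemmas \ref{lem1} and \ref{lem2} holds for the new leading coefficients (a consequence of $\alpha_1\beta_1\neq 0$ in the original system), and justify rigorously that a root chain whose real parts accumulate at $0$ precludes the uniform negative upper bound on real parts required by the definition of exponential stability for a neutral delay system.
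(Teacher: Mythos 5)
Your proposal is correct and follows essentially the same route as the paper: apply Lemma \ref{thm1} to obtain the neutral/advanced closed-loop form, read off the leading delayed and undelayed coefficients in each case, and invoke Lemma \ref{lem1} (root chains with real parts accumulating at $\frac{1}{\tau}\log|\alpha_1/(\alpha_1+\beta_1/\alpha)|$, equal to $0$ when $a>b+1$) or Lemma \ref{lem2} (advanced type when the leading undelayed term cancels). The "hurdles" you flag — checking which coefficients survive and that the leading-coefficient nonvanishing hypotheses hold — are exactly the bookkeeping the paper carries out explicitly via the characteristic equation.
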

 
\begin{proof}
 From Proposition \ref{thm1}, we know that the closed-loop of \eqref{eq.1} using \eqref{eq.9}-\eqref{eq.10} is governed by the equation 
 \begin{align}
&\alpha_{1} y^{(a)}+ \alpha_{2} y^{(a-1)} + \hdots + \alpha_{a-b-1}  y^{(b+2)}
\nonumber \\ &
+ \left(\alpha_{a-b}+\frac{1}{\alpha}\beta_{1}\right)y^{(b+1)}
+
\left(\alpha_{a-b+1}
+\frac{(\beta_1 K + \beta_{2})}{\alpha}  \right) y^{(b)}
 \nonumber \\ & + \hdots 
+ \left(\alpha_{a}
+\frac{(\beta_b K + \beta_{b+1})}{\alpha}  \right) \dot{y}
 + \left(\alpha_{a+1}+\frac{\beta_{b+1}K}{\alpha} \right) y\nonumber 
\\ &
-\alpha_{1}y_{\tau}^{(a)}- \hdots -\alpha_{a+1}y_{\tau} = 0. 
\end{align}
The characteristic equation of this system is
\begin{align}
     &z^{a}+ \frac{\alpha_{2}}{\alpha_{1}}z^{a-1} + \hdots +  
\frac{\alpha_{a-b-1}}{\alpha_{1}}z^{b+2} \nonumber 
\\ &
+\frac{\left(\alpha_{a-b}+\beta_{1}/\alpha\right)}{\alpha_{1}}z^{b+1}
\nonumber \\ &
+ \frac{1}{\alpha_1} \left(\alpha_{a-b+1}
+\frac{(\beta_1 K + \beta_{2})}{\alpha}  \right) z^{b}
+ \hdots \nonumber 
\\ & + \frac{1}{\alpha_1} \left(\alpha_{a}
+\frac{(\beta_b K + \beta_{b+1})}{\alpha}  \right) z
+\frac{\left(\alpha_{a+1}+K\left(\beta_{b+1}/\alpha \right) \right)}{\alpha_{1}} \nonumber  
     \\ &
     -e^{-\tau z}\left(z^{a}+ \frac{\alpha_{2}}{\alpha_{1}} z^{a-1} + \hdots +\frac{\alpha_{a+1}}{\alpha_{1}}\right)=0. 
\end{align}
 
If $a>(b+1)$, using 
Lemma \ref{lem1}, we conclude that the system admits infinitely many characteristic roots of the form 
 \begin{equation}
     z_{k} = i\frac{2k\pi}{\tau}+o(1) 
     \  \ \ k=0, \pm 1, \pm 2, \hdots
 \end{equation}
In other words, there exists a sequence of characteristic roots that tends towards the imaginary axis. The origin is therefore not exponentially stable.

 We consider now the case where $a=(b+1)$. Hence, the characteristic equation of the system is given by 
\begin{align}
    & \frac{\left(\alpha_{1}+\beta_{1}/\alpha\right)}{\alpha_{1}}z^{a}
+ \frac{1}{\alpha_1} \left(\alpha_{2}
+\frac{(\beta_1 K + \beta_{2})}{\alpha}  \right) z^{a-1}
+ \hdots \nonumber 
\\ & + \frac{1}{\alpha_1} \left(\alpha_{a}
+\frac{(\beta_b K + \beta_{b+1})}{\alpha}  \right) z
+\frac{\left(\alpha_{a+1}+K\left(\beta_{b+1}/\alpha \right) \right)}{\alpha_{1}} \nonumber 
     \\ &
     -e^{-\tau z}\left(z^{a}+ \frac{\alpha_{2}}{\alpha_{1}} z^{a-1} + \hdots +\frac{\alpha_{a+1}}{\alpha_{1}}\right)=0. \label{eq.16} 
\end{align}
If we have $\alpha = -\beta_{1}/\alpha_{1}$, then the previous equation becomes 
\begin{align}
& \frac{1}{\alpha_1} \left(\alpha_{2}
+\frac{(\beta_1 K + \beta_{2})}{\alpha}  \right) z^{a-1}
+ \hdots \nonumber 
\\ & + \frac{1}{\alpha_1} \left(\alpha_{a}
+\frac{(\beta_b K + \beta_{b+1})}{\alpha}  \right) z
+\frac{1}{\alpha_{1}} 
\left(\alpha_{a+1}+K\frac{\beta_{b+1}}{\alpha} \right) \nonumber 
     \\ &
     -e^{-\tau z}\left(z^{a}+ \frac{\alpha_{2}}{\alpha_{1}} z^{a-1} + \hdots +\frac{\alpha_{a+1}}{\alpha_{1}}\right)=0. \label{eq.17} 
\end{align}
If, in addition, $\alpha_{a+1}\alpha \neq -\beta_{b+1}K$ or there exists $i\in \lbrace 2,3,...,b+1 \rbrace $ such that $\alpha_{i}\alpha \neq -\left(\beta_{i-1}K+\beta_{i}\right)$, then equation \eqref{eq.17} is the characteristic equation of an advanced-type system. As a result, using Lemma \ref{lem2}, we conclude that it possesses infinitely many roots with arbitrarily large real parts, which leads to instability of the closed-loop system.  

If $\alpha \neq -\beta_{1}/\alpha_{1}$, the characteristic equation of the closed-loop system is given by \eqref{eq.16}. 
Hence, if we let 
\begin{align}
\theta := \text{arg} \left(\frac{\alpha_{1}}{\alpha_{1}+\beta_{1}/\alpha}\right)
\end{align}
 and apply Lemma \ref{lem1}, we conclude that the system admits infinitely many characteristic roots of the form 
\begin{align}
    z_{k} = \frac{1}{\tau}\left(\log \left(\left|\frac{\alpha_{1}}{\alpha_{1}+\beta_{1}/\alpha}\right|\right)+i\left(\theta + 2k\pi \right)\right)+o(1),
\end{align}
for $k=0,\pm 1, \pm 2, \hdots$. 
In other words, there exists an infinite number of characteristic roots, for which, the real parts converge to
\begin{align}
    \frac{1}{\tau}\log \left(\left|\frac{\alpha_{1}}{\alpha_{1}+\beta_{1}/\alpha}\right| \right).
\end{align}
The origin of the closed-loop system is therefore unstable if \eqref{eqthmblast} holds, and fails to be exponentially stable if \eqref{eqthmlast} is verified. 
\end{proof}

\begin{remark}
Note that the non-exponential and instability properties in Theorem \ref{thm1} hold for any delay $\tau>0$, although the closed-loop system is exponentially stable when $\tau = 0$. This property is known as the small time-delay effect \cite{smalldelay1,smalldelay2}. This property is due to the transcendental term $e^{-\tau z}$ in the characteristic equation of the system, which generates infinitely many characteristic roots as $\tau$ varies by an infinitesimal amount from zero. 
\ifitsdraft
In this context, we say that the \textit{root continuity argument} is broken. 
\fi
\end{remark}

\subsection{A Stability Result}

Inspired by \cite{chen2000delay}, we propose sufficient conditions to guarantee exponential stability of the closed-loop system using the iP controller.


\begin{theorem} \label{prop2}
Consider system \eqref{eq.1}, with $a=b+1$, in closed loop with the intelligent proportional controller in \eqref{eq.9}-\eqref{eq.10} such that $\alpha \neq -\beta_{1}/\alpha_{1}$, and \begin{align}\label{prop2condition1}
\left|\frac{\alpha_{1}}{\alpha_{1}+\beta_{1}/\alpha} \right|<1.
\end{align}
Then, the origin is exponentially stable if
\begin{align}
&s(\hat{A})<0, \quad \hat{A} := 
    \begin{bmatrix}
        0_{a-1} & I_{a-1} 
        \\ 
        A_{a+1} & \begin{bmatrix} A_{a} &  \hdots & A_{2} \end{bmatrix}
    \end{bmatrix}, \label{prop2condition2}\\
&\tau \left|(\alpha_{a+1}~...~\alpha_{2})\right|+|\alpha_{1}|< |\bar{\alpha}_{1}|, \label{prop2condition3} 
\end{align}
where $s(\hat{A})$ is the spectral abcissa of $\hat{A}$, and 
\begin{align}
0 >&~ \big|(A_{a+1} ~ A_{a} ~... ~A_{2}) \big|
 \left( |\alpha_{1}|
 +\tau |(\alpha_{a+1} ~ \alpha_{a} ~ ...~ \alpha_{2}) | \right) \nonumber \\
&~+|\bar{\alpha}_{1}| \mu (\hat{A}),
\label{prop2condition4}
\end{align}
where $\mu (\hat{A})$ is the logarithmic norm of $\hat{A}$, and for all $i \in \lbrace 1,2,...,b+2 \rbrace$, we have 
 $ A_{i} := (-\bar{\alpha}_{i}+\alpha_{i})/\bar{\alpha}_{1}$, $\bar{\alpha}_{i} := \alpha_{i}-K_{di}$,
and $K_{di}$ is given in \eqref{eq.11}. 
\end{theorem}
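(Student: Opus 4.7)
Following the model-transformation technique of \cite{chen2000delay}, my plan is to rewrite the closed loop as $\dot x = \hat A\,x$ plus integral- and neutral-delay perturbations, and then to control these perturbations through the logarithmic norm $\mu(\hat A)$ combined with a Halanay-type inequality.

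First, I would use Lemma~\ref{thm1} to cast the closed loop in the neutral form
\[
\bar\alpha_1 y^{(a)}(t) + \cdots + \bar\alpha_{a+1} y(t) = \alpha_1 y^{(a)}_\tau(t) + \cdots + \alpha_{a+1} y_\tau(t),
\]
where $\bar\alpha_i := \alpha_i - K_{di}$, with $\bar\alpha_1 \neq 0$ because $\alpha \neq -\beta_1/\alpha_1$ forces $K_{d1}\neq\alpha_1$. Choosing the state $x := (y,\dot y,\ldots,y^{(a-1)})^\top \in \mathbb R^a$, I would apply the Leibniz--Newton identity $y^{(j)}_\tau(t) = y^{(j)}(t) - \int_{t-\tau}^t y^{(j+1)}(s)\,ds$ to every delayed term of order $j<a$, while leaving the neutral term $\alpha_1 y^{(a)}_\tau$ untouched. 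Using the relation $\alpha_i - \bar\alpha_i = \bar\alpha_1 A_i$, this rearranges into
\[
\dot x(t) = \hat A\, x(t) + e_a\,\delta(t), \qquad \bar\alpha_1\,\delta(t) = \alpha_1 \dot x_a(t-\tau) - \sum_{i=2}^{a+1}\alpha_i \int_{t-\tau}^t \dot x_{a-i+2}(s)\,ds,
\]
with $e_a$ the last canonical basis vector of $\mathbb R^a$.

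Second, I would exploit condition~\eqref{prop2condition1}, $|\alpha_1/\bar\alpha_1|<1$, to derive an a priori bound of the form $\sup_{s\leq t}|\dot x_a(s)| \leq C\,\sup_{s\leq t}\|x(s)\|$. Indeed, the neutral equation directly yields $|\bar\alpha_1|\,|\dot x_a(t)| \leq |\alpha_1|\,|\dot x_a(t-\tau)| + |(\bar\alpha_{a+1}\ \cdots\ \bar\alpha_2)|\,\|x(t)\| + |(\alpha_{a+1}\ \cdots\ \alpha_2)|\,\|x(t-\tau)\|$, and the contraction $|\alpha_1|/|\bar\alpha_1|<1$ makes the iteration converge. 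Plugging this estimate into $\delta$ and using $|\int_{t-\tau}^t g(s)\,ds| \leq \tau\, \sup_{[t-\tau,t]}|g|$ produces an implicit inequality for $\sup|\dot x_a|$ whose denominator stays strictly positive exactly under~\eqref{prop2condition3}; solving it yields
\[
|\delta(t)| \leq \kappa\,\sup_{s\in[t-N\tau,\,t]}\|x(s)\|, \qquad \kappa := \frac{|(A_{a+1}\ \cdots\ A_2)|\,\big(|\alpha_1|+\tau\,|(\alpha_{a+1}\ \cdots\ \alpha_2)|\big)}{|\bar\alpha_1|},
\]
for some finite $N\in\mathbb N$.

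Third, from $\dot x = \hat A x + e_a\,\delta$ the logarithmic-norm inequality $D^+\|x(t)\| \leq \mu(\hat A)\|x(t)\| + |\delta(t)|$, combined with the previous bound, gives a Halanay-type differential inequality whose effective comparison rate is $\mu(\hat A) + \kappa$; by condition~\eqref{prop2condition4} this rate is strictly negative, so the Halanay lemma delivers exponential decay of $\|x(t)\|$, hence of $|y(t)|$. The chief obstacle is the neutral term $\alpha_1\dot x_a(t-\tau)$, which prevents a direct bound on $\delta$ by $\|x\|$ alone: \eqref{prop2condition1} is what makes the iteration on $\sup|\dot x_a|$ converge, \eqref{prop2condition3} is what keeps the resulting self-consistent estimate finite, and \eqref{prop2condition4} produces the negative effective rate needed to close the Halanay argument.
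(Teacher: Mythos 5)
Your overall strategy---first-order neutral realization, Leibniz--Newton model transformation, a logarithmic-norm differential inequality, and a Halanay-type comparison---is the right family of techniques for delay-dependent criteria. But the paper does not rederive the criterion: after writing the closed loop as $\dot Y - D\dot Y_\tau = AY+BY_\tau$ with companion-form $A$, $B$, $D$, it directly invokes \cite[Theorem (i)]{chen2000delay}, whose hypotheses are $\rho(D)<1$, $\hat A=A+B$ Hurwitz, $\tau\|B\|+\|D\|<1$, and $\mu(\hat A)+\tau\|\hat A^{\top}B\|+\|\hat A^{\top}D\|<0$, and then simply computes these quantities; $\hat A^{\top}B$ and $\hat A^{\top}D$ are rank one, which is exactly where the factor $\big|(A_{a+1}~\cdots~A_2)\big|$ in \eqref{prop2condition4} comes from. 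So you are attempting to reprove the underlying criterion rather than apply it.

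That attempt has a concrete gap: the constant $\kappa$ is asserted, not derived, and the steps you sketch do not produce it. If you bound the neutral term through $\dot x_a(s)=(A_{a+1}~\cdots~A_2)\,x(s)+\delta(s)$ and close the resulting self-consistent inequality, you obtain something of the form
$\sup|\delta|\le \frac{(|\alpha_1|+\tau|\alpha_2|)\,|(A_{a+1}\,\cdots\,A_2)|+\tau\sum_{i\ge 3}|\alpha_i|}{|\bar\alpha_1|-|\alpha_1|-\tau|\alpha_2|}\,\sup\|x\|$,
which differs from your $\kappa$ in both numerator (the integral terms of order $j<a$ carry no factor $|(A_{a+1}\,\cdots\,A_2)|$) and denominator ($|\bar\alpha_1|-|\alpha_1|-\tau|\alpha_2|$ rather than $|\bar\alpha_1|$); iterating instead on $\sup|\dot x_a|$ as you propose yields a constant involving $|(\bar\alpha_{a+1}\,\cdots\,\bar\alpha_2)|$ and a factor $(1-|\alpha_1|/|\bar\alpha_1|)^{-1}$. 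Either way you prove exponential stability under a condition genuinely different from (and generically more conservative than) \eqref{prop2condition4}, so the theorem as stated is not established. A secondary flaw: unrolling the neutral recursion gives a geometrically weighted supremum over the entire past, not $\sup_{[t-N\tau,t]}$ for finite $N$; that is still usable in a fading-memory Halanay argument but not as written. The clean repair is the paper's route: quote the criterion of \cite{chen2000delay} and verify its four hypotheses by computing $\rho(D)$, $\|B\|$, $\|D\|$, $\|\hat A^{\top}B\|$, $\|\hat A^{\top}D\|$ for the companion-form matrices.
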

\begin{proof}
Since $a=b+1$, then the closed-loop of \eqref{eq.1} using \eqref{eq.9}-\eqref{eq.10} is given by
\begin{equation}\label{prop2proof1}
\bar{\alpha}_{1}y^{(a)}+...+
\bar{\alpha}_{a+1}y = \alpha_{1}y^{(a)}_{\tau}+...+\alpha_{a+1}y_{\tau}. 
    \end{equation}
We define the state vector  
$ Y := \begin{bmatrix}
        y & \dot{y} & \hdots & y^{(a-1)}
    \end{bmatrix}^{\top}. $
Since $\alpha \neq -\beta_{1}/\alpha_{1}$, the closed-loop system \eqref{prop2proof1} admits the following state-space representation  
\begin{equation} \label{prop2proof2}
    \dot{Y} - D\dot{Y}_{\tau} = AY+BY_{\tau}, 
\end{equation}
where 
\begin{align}
    D & := \begin{bmatrix}
        0_{(a-1)} & 0_{a-1}  \\ && \\
        0_{a-1}^\top & \alpha_{1}/\bar{\alpha}_{1}
    \end{bmatrix},~  B := \begin{bmatrix}
        0_{(a-1)a} 
        \\ \\ 
        \begin{bmatrix}
        \frac{\alpha_{a+1}}{\bar{\alpha}_{1}} &  \hdots &\frac{\alpha_{2}}{\bar{\alpha}_{1}} \end{bmatrix} 
    \end{bmatrix}, \nonumber 
  \\   
    A & := 
\begin{bmatrix}
0_{a-1} & I_{a-1} \\ \\
-\frac{\bar{\alpha}_{a+1}}{\bar{\alpha}_{1}}
& 
\begin{bmatrix} 
          -\frac{\bar{\alpha}_{a}}{\bar{\alpha}_{1}} & \hdots & -\frac{\bar{\alpha}_{2}}{\bar{\alpha}_{1}}
\end{bmatrix}
    \end{bmatrix}.
    \end{align}
    
We know from \cite[Theorem (i)]{chen2000delay} that a sufficient condition for exponential stability of \eqref{prop2proof2}, is that the spectral radius of $D$ satisfies $\rho (D)<1$, $\hat{A} = A+B$ is Hurwitz, and 
\begin{align}
&\tau ||B||+||D|| < 1,~ \text{and} \label{prop2proof3} \\
&\mu (\hat{A})+\tau ||\hat{A}^{\top}B|| + ||\hat{A}^{\top}D|| < 0. \label{prop2proof4}
\end{align}

The Schur stability condition $\rho (D)<1$ is equivalent to $|\alpha_{1}/\bar{\alpha}_{1}|<1$, i.e. to condition \eqref{prop2condition1}, and Hurwitz stability of $\hat{A}$ is guaranteed by condition \ref{prop2condition2}. Moreover, observing that
\begin{equation}
    ||B|| = \frac{1}{|\bar{\alpha}_{1}|}
|(\alpha_{a+1}~...~\alpha_{2})|,
\end{equation}
and $||D|| = |\alpha_{1}|/|\bar{\alpha}_{1}|$, we conclude that inequality \eqref{prop2proof3} is satisfied under \eqref{prop2condition3}.  

Finally, since 
\begin{equation}
    \hat{A}^{\top}B = \frac{1}{\bar{\alpha}_{1}} \begin{bmatrix}
        \alpha_{a+1}A_{a+1} &\alpha_{a}A_{a+1} & \hdots & \alpha_{2}A_{a+1}\\ 
        \alpha_{a+1}A_{a} & \alpha_{a}A_{a} & \hdots & \alpha_{2}A_{a}\\
        \vdots & \vdots & \hdots & \vdots \\
        \alpha_{a+1}A_{2} & \alpha_{a}A_{2} & \hdots & \alpha_{2}A_{2}
    \end{bmatrix}
\end{equation}
and 
\begin{equation}
\hat{A}^{\top}D = \frac{\alpha_{1}}{\bar{\alpha}_{1}} \begin{bmatrix}
    0_{a(a-1)} &  \begin{bmatrix}
     A_{a+1}
    \\  \vdots \\
   A_{2}
  \end{bmatrix}
\end{bmatrix},
\end{equation}
it follows that
\begin{equation}
\begin{aligned}
|| \hat{A}^{\top}B|| = &\frac{1}{|\bar{\alpha}_{1}|} 
\big|(\alpha_{a+1}~...~\alpha_{2}) \big| 
 \big|(A_{a+1}~...~A_{2}) \big|, 
\end{aligned}
\end{equation}
and 
\begin{equation}
||\hat{A}^{\top}D|| = \left|\frac{\alpha_{1}}{\bar{\alpha}_{1}}\right|
\big|(A_{a+1}~...~A_{2}) \big|. 
\end{equation}
As a result, inequality \eqref{prop2proof4} is satisfied under condition \eqref{prop2condition4}, which completes the proof. 
\end{proof}

\begin{remark} \label{rem5}
  Note that it is not difficult to adapt the statement of Theorem \ref{thm2} when using the general intelligent PD controller defined recursively by 
   \begin{equation}
        u = u_{\tau} -\frac{1}{\alpha}\left(K_{1}y + K_{2}\dot{y}+ \hdots +K_{\nu} y^{(\nu -1)} + y^{(\nu)} \right),
    \end{equation}
which corresponds to the \textit{ultra-local} form 
\begin{equation}
{y}^{(\nu)}(t) = \alpha u(t) + F(t) \qquad \forall t \geq 0, \quad \nu \in \mathbb{N}. 
\end{equation}
Note that the resulting closed-loop system is of the form 
\begin{align} 
\bar{\alpha}_{1}y^{(\nu + b)}+\hdots +\bar{\alpha}_{\bar{\nu}}y= \alpha_{1} y^{(a)}_\tau +\hdots +\alpha_{a+1} y_\tau,
\end{align}
for $\bar{v}:=b+\nu+1$ and $\bar{\alpha}_{i} \in \mathbb{R}$ for all $i \in \lbrace 1,2,\hdots \nu+b+1 \rbrace$.

Here we distinguish between two cases:
\begin{itemize}
\item When the smallest index $i \in \{1,2,...,b + \nu +1\}$ such that $\bar{\alpha}_i \neq 0$ guarantees
$\nu + b - i + 1 \leq a$, then the tools in Lemmas \ref{lem1} and \ref{lem2} used to study the iP controller can be used. Indeed, the resulting closed-loop system must have one of the forms listed in Remark \ref{rem1}.

\item When $\nu + b - i + 1> a$, the resulting closed-loop system has none of the forms listed in Remark \ref{rem1}. It is rather a delayed differential equation, which, according to \cite{roots_locali}, can only have a finite number of unstable characteristic roots. The stability analysis for this particular case will be considered in future work. 
\end{itemize}
\end{remark}

\section{Numerical Examples} \label{numerical_examples}

In this section, we illustrate our main results via three numerical examples. The simulations are performed on Matlab. Neutral delay systems are simulated using the \textit{ddensd} solver, and advanced-type systems are simulated using an implicit Euler scheme. 

\begin{example}
Consider the first-order equation
\begin{equation} \label{eqexp1}
    \dot{y} = y + u.  
\end{equation}
Note that \eqref{eqexp1} has the form of 
 \eqref{eq.1} with $a=b+1$.  Based on Theorem \ref{thm2}, the origin of \eqref{eqexp1} in closed loop with \eqref{eq.9}-\eqref{eq.10} is unstable if either $\alpha = -1$ and $K \neq -1$, or $\alpha \neq -1$ and 
$$\left|\frac{1}{1+1/\alpha}\right|>1,$$
i.e. $\alpha \in (-\infty, -1/2)/\lbrace -1\rbrace $. Indeed, if we take $\alpha = -1$, then the closed-loop system is given by
\begin{equation}
\dot{y}_{\tau} = -\left(1+K\right)y+y_{\tau}. \nonumber
\end{equation}
This is a differential equation of advanced type if $K\neq -1$. Its characteristic equation possesses infinitely many roots with arbitrarily large real part, leading to instability.

Let us take $\tau = 0.01$ and $K=100$. The output is initialized to $y(t)=e^t$ on $[-\tau,0]$ and the control input is initialized to $u(t)=0$ on $[-\tau,0]$. The simulation result is shown in Figure \ref{fig1}.
\begin{figure}
    \centering
    \includegraphics[scale=0.21]{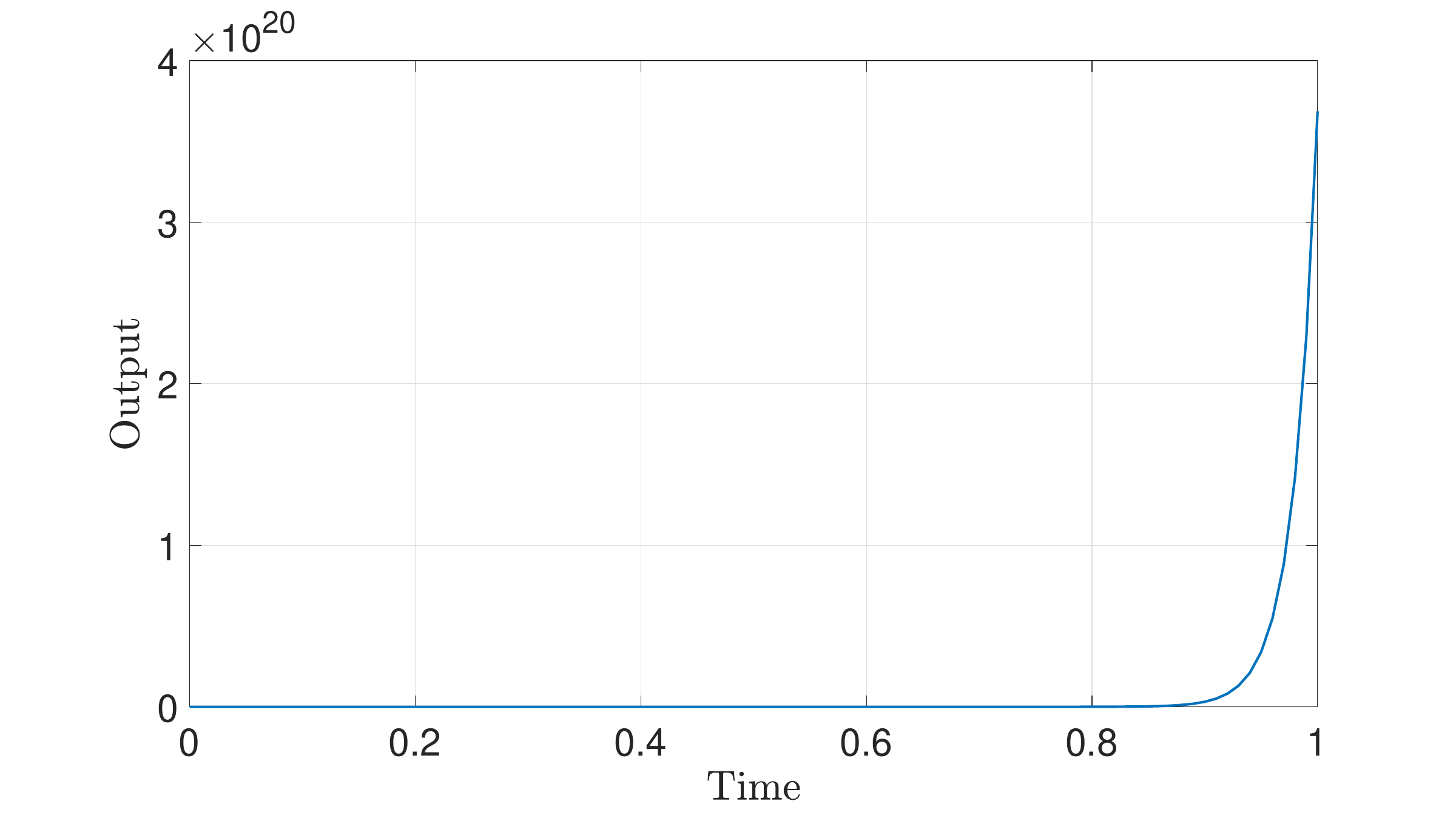}
    \caption{The closed-loop response for $\alpha = -1$ and $K=100$.}
    \label{fig1}
\end{figure}

If we take $\alpha = -2$, the closed-loop system becomes 
\begin{equation}
    \dot{y}-2\dot{y}_{\tau} = \left(2+K\right)y -2y_{\tau}, \nonumber \nonumber 
\end{equation}
which is a neutral delay system whose characteristic equation admits the chain of roots
$z_{k} = \frac{1}{\tau}\left(\log 2 + i2k\pi\right)+o(1)$
for all $k \in \mathcal{Z}$. 
There are, therefore, infinitely many characteristic roots with real parts arbitrarily close to $(\log 2)/\tau$, leading to instability. 
We plot in Figure \ref{fig2} the 
closed-loop response for the same initialization as before, for different values of $\tau$ and for  $K=10$. As expected, by decreasing the time delay, the divergence of the output increases. This is due to the fact that the limit towards which tends the real part of the characteristic roots is inversely proportional to $\tau$. 

\begin{figure}
    \centering
    \includegraphics[scale=0.21]{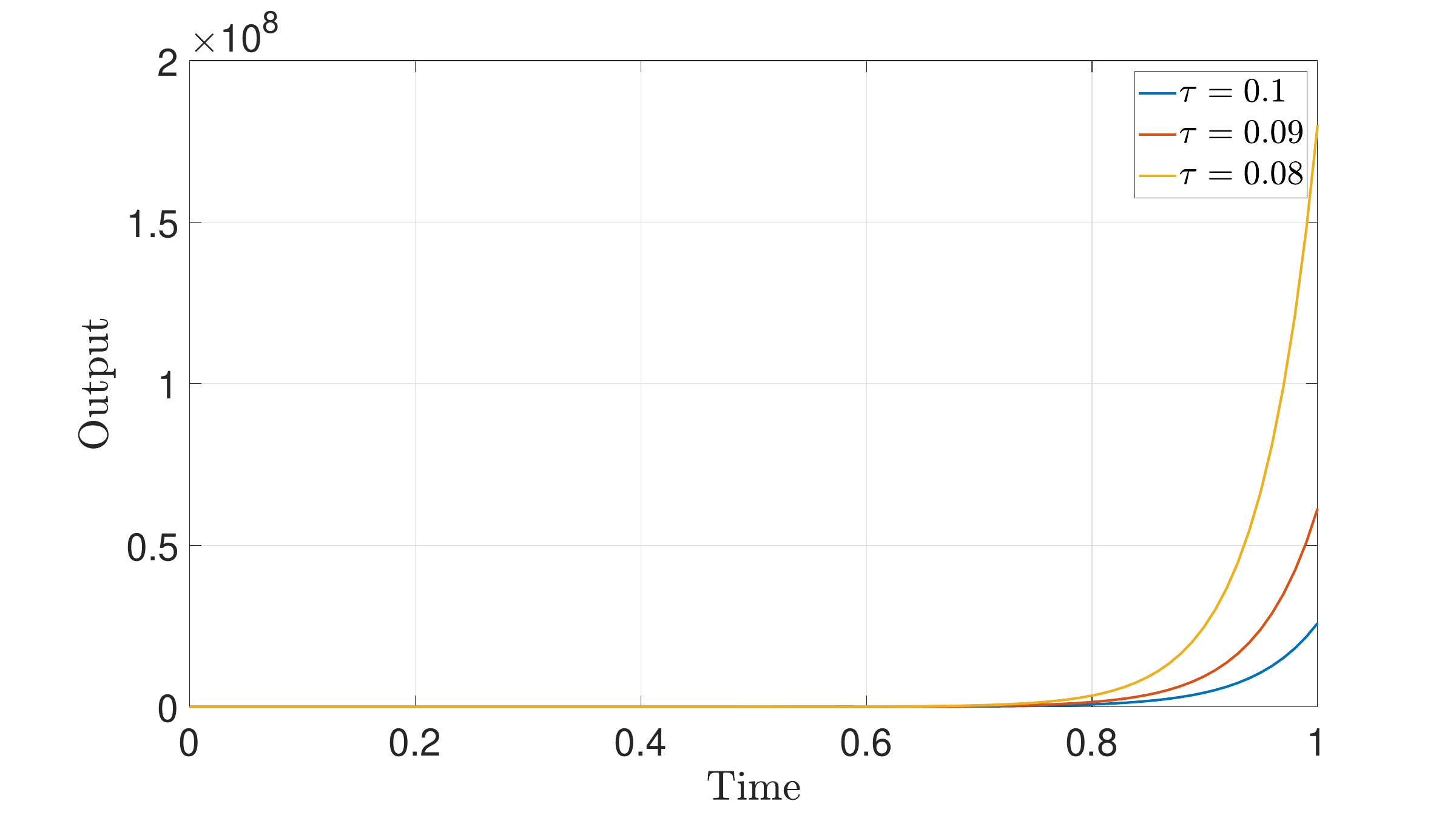}
    \caption{The closed-loop response for $\alpha = -2$ and $K=10$.}
    \label{fig2}
\end{figure}
\end{example}

\begin{example}
We consider now the second-order system 
\begin{equation}\label{egprelast}
    \ddot{y}=y+u, 
\end{equation}
which has the form of \eqref{eq.1} with  $a>(b+1)$. Based on Theorem \ref{thm2}, the origin of system \eqref{egprelast} in closed-loop is not exponentially stable. The closed-loop system is given by 
\begin{equation}
    \ddot{y}-\ddot{y}_{\tau}+\frac{1}{\alpha}\dot{y}+\left(-1+\frac{K}{\alpha}\right)y+y_{\tau}=0, \nonumber 
\end{equation}
which is a neutral delay system whose characteristic equation possesses infinitely many roots arbitrarily close to the 
imaginary axis. 

Although the origin is not exponentially stable, we may still observe convergence towards the origin or boundedness. For example, by setting $\alpha = 0.1$, $K=5$, $\tau = 0.1$, and by considering the initialization $y(t)=e^t$ and $u(t)=0$ on $[-\tau,0]$, we obtain the plot in Figure \ref{fig3}, which illustrates boundedness of 
$y$ and $\dot{y}$. 
\begin{figure}
    \centering
    \includegraphics[scale=0.21]{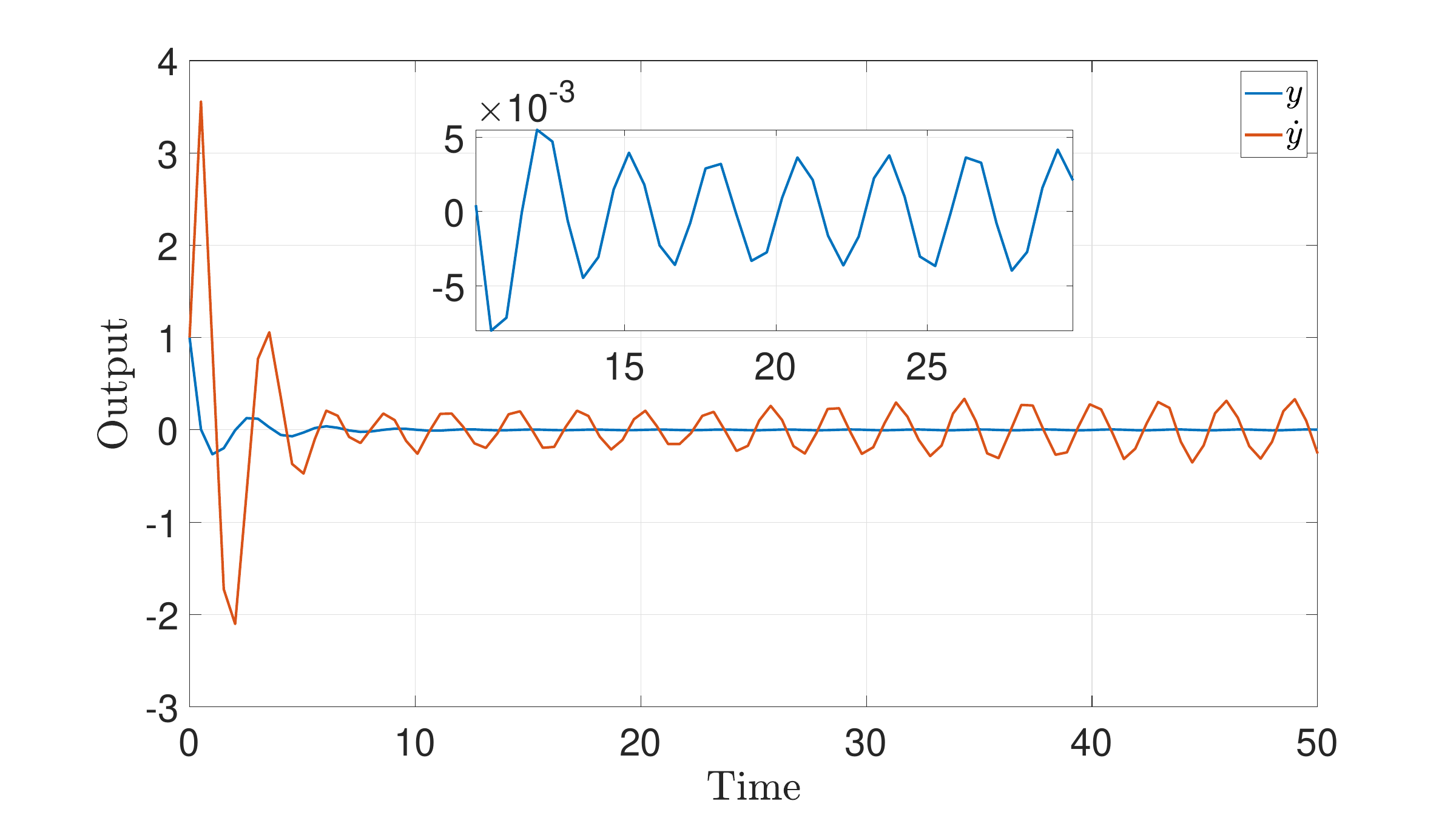}
    \caption{The closed-loop response for $\alpha = 0.1$, $K=5$.}
    \label{fig3}
\end{figure}
\end{example}

\begin{example}  \label{exptun}
Consider the control system
\begin{equation} \label{lastsystem}
\dot{y} + \alpha_2 y = \beta_1 u  \qquad (\alpha_2 , \beta_1) \in \mathbb{R}\times \mathbb{R}^{*}.
\end{equation}
Using the controller in \eqref{eq.9}-\eqref{eq.10}, the resulting closed-loop system is governed by
\begin{equation} \label{preprelast}
\left(1+\frac{\beta_1}{\alpha}\right)\dot{y}-\dot{y}_{\tau}=\left(- \alpha_2 - \frac{\beta_1}{\alpha}K\right)y + \alpha_2 y_{\tau}. 
\end{equation}

First, we would like to avoid having $\alpha + \beta_1 = 0$, which could lead to a closed-loop system of advanced type and thus to instability. For this reason, we set $\alpha := \theta \sign (\beta_1)$, for some $\theta > 0$. Hence, equation \eqref{preprelast} can be expressed as 
\begin{equation}
    \dot{y}-\frac{\theta}{\theta +|\beta_1|}\dot{y}_{\tau}=
   - \frac{\theta \alpha_2 + |\beta_1| K}{\theta+|\beta_1|} y +\frac{ \theta \alpha_2}{\theta+|\beta_1|}y_{\tau}. \nonumber 
\end{equation}

Now, we provide conditions on $\alpha$, $\tau$, and $K$ such that \eqref{prop2condition1}-\eqref{prop2condition4} hold. 
First, condition \eqref{prop2condition1} reduces to $\left|\frac{\theta}{\theta + |\beta_1|}\right|<1. $
According to Theorem \ref{thm1}, the latter inequality is actually a necessary condition for exponential stability, which is verified since  $\theta >0$. 
Second, condition \eqref{prop2condition2} requires that 
$\frac{-|\beta_1| K}{\theta +|\beta_1|} < 0, $
which is verified provided that $K>0$. Third, condition \eqref{prop2condition3} reduces to  
$ \tau |\alpha_2| \theta <|\beta_1|, $
which is satisfied by taking either $\tau$ or $\theta$ sufficiently small. Finally, condition \eqref{prop2condition4} is verified provided that 
\begin{equation}
\label{eqexplast}
\begin{aligned} 
& - \left(1+|\beta_1|/\theta\right)\left(
\alpha_2 + (\beta_1/\theta)K\right) \\ & +\tau |\alpha_2| \left|\alpha_2+(|\beta_1|/\theta)K \right|
 +|\alpha_2|<0.  
\end{aligned}
\end{equation}
For notation simplicity, we let $m:=\alpha_2 + (\beta_1/\theta)K$ and choose either 
$K$ sufficiently large or $\theta$ sufficiently small to have $m>0$. Hence, inequality \eqref{eqexplast} becomes 
\begin{equation}
    \left[ - |\beta_1| - \theta +\tau \theta |\alpha_2|\right]|m|+ \theta |\alpha_2|<0. 
\end{equation}
As a result, it is enough to choose $\theta$ sufficiently small so that \eqref{eqexplast} holds. 

Note that, for this example, the knowledge of $\sign (\bar{\beta})$ allows us to tune $\theta$ and $\tau$ to be sufficiently small while maintaining $K/\theta$ not too large, to obtain exponential stability. For a fixed delay $\tau > 0$, we can achieve exponential stability by tuning only $\alpha$ and $K$.

We take $\alpha_{2}=-1$, $\tau = 0.1$, $\beta_{1}=2$. The output is initialized to $y(t)=e^{t}$ for all $t\in [-\tau,0]$ and the control input to $u(t)=0$ for all $t\in [-\tau,0]$.
We plot in Figure \ref{figlast_example} the closed-loop response for $\alpha = 0.01$ and $K \in \lbrace 1,2,3 \rbrace$, under which, the sufficient conditions for exponential stability are verified. 
Furthermore, we show in the same figure the unstable closed-loop response for $K=10$ and $\alpha=1000$. Note that, when ignoring the delay, the latter gains lead to exponential stability of the closed-loop system $\dot{y} = - Ky$.
\begin{figure}
\centering
\includegraphics[scale=0.21]{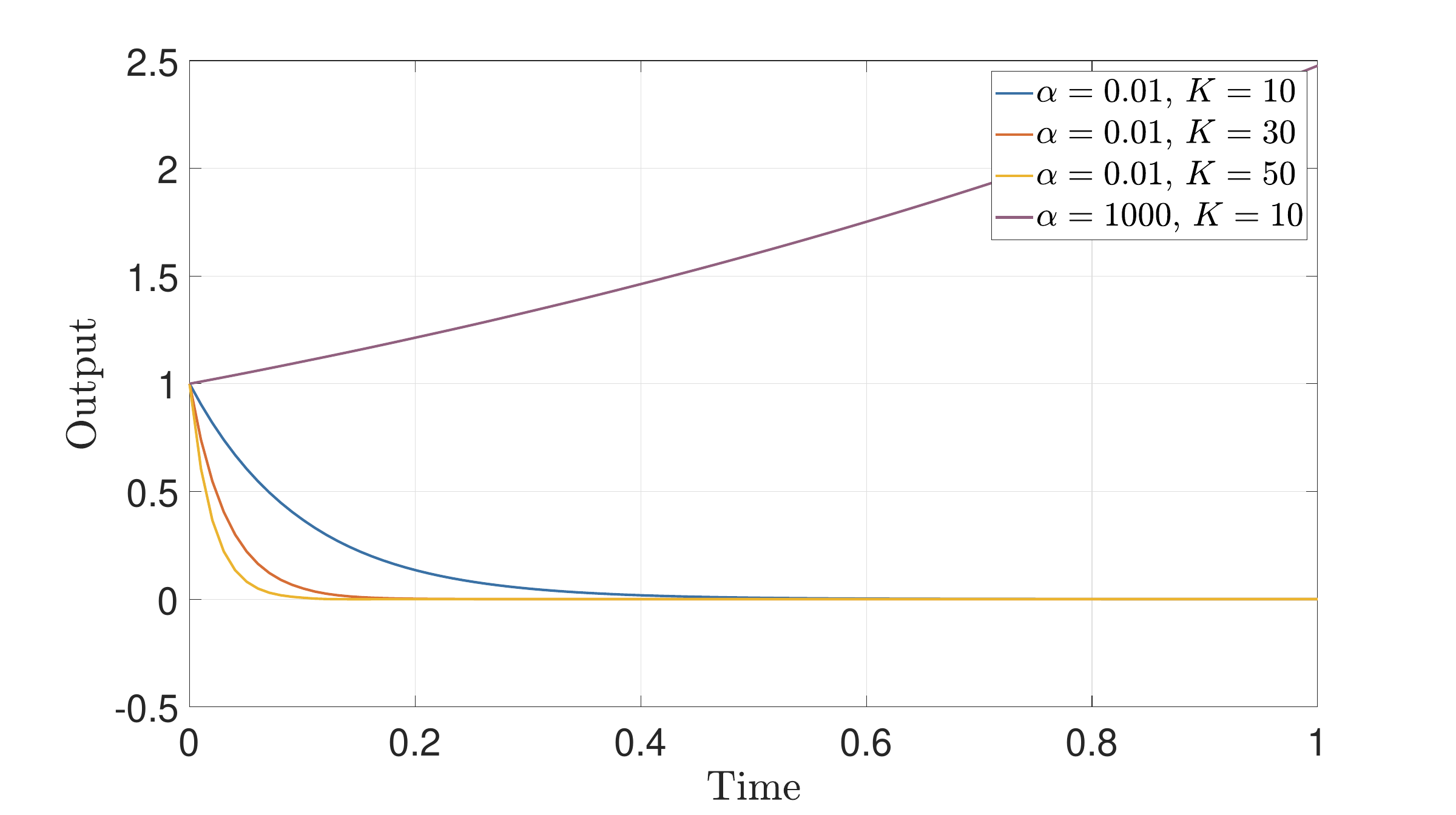}
\caption{The closed-loop response for different $(\alpha, K)$.}
\label{figlast_example}
\end{figure}
\end{example}

\section{Experimental Results} \label{experiments}
In this section, we illustrate some of the obtained theoretical results on an experimental benchmark, which consists of the electronic throttle valve depicted in Figure \ref{valve}. 
This is a butterfly valve used for flow-control applications. The valve's commercial reference is 03L128063. The output of the valve is the opening angle $\theta$, which is regulated by imposing an input voltage $u$ on a $12$V DC motor. The motor is controlled using the SHIELD-MD10 board. The input $u$ is applied by generating a PWM signal from an Arduino Mega $2560$ via the Arduino IDE. The output $\theta$ is measured using the magnetic angle sensor KMA221. We refer to \cite{witrant_valve} for more details on the experimental test bench.
\begin{figure}
\centering
\includegraphics[scale=0.6]{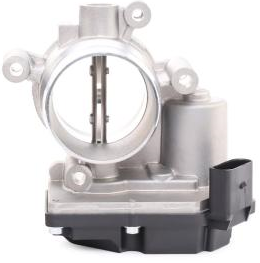}
\caption{Electronic Throttle Valve}
\label{valve}
\end{figure}

Now, given the angular reference $\theta_r$ in Figure \ref{experiment1}, we suppose that, locally, the behavior of the tracking error $y := \theta - \theta_r$ is governed by equation \eqref{eq.1}. 
As a result, we implement the iP controller in \eqref{eq.9}-\eqref{eq.10} with $\tau := 0.05s$, which is also the sampling time of the test bench. 

The output derivative $\dot{y}$ at $t_k$ is approximated using the Euler backward method, i.e., 
$\dot y_k = (y_k - y_{k-1})/\tau$. As a results, the discrete-time controller $u_{k}$ at $t_k$ is given by
\begin{align}
u_{k} := u_{k-1}+\frac{1}{\alpha \tau}\left(-(K\tau+1)y_{k}+y_{k-1}\right).
\end{align}
The initialisation of the controller is $u([-\tau,0])=0$.

In Figure \ref{experiment1}, we plot the valve response for $\alpha := 1/(0.01\tau) = 2000$ and $K := 2\alpha = 4000$, which shows oscillations of $\theta$ around $\theta_r$. Hence, exponential stability is not achieved. If the error $y := \theta - \theta_r$ is governed by \eqref{eq.1} with $a=b+1$, then this lack of exponential stability necessarily means that the sufficient conditions for exponential stability that we derived in Theorem \ref{prop2} are not satisfied.
To verify this fact, we propose to identify such a model, using the  \textit{System-Identification} Matlab Toolbox \cite{ljungtoolbox}. 
For $a = 2$ and $b=1$, we obtained the  model  
\begin{align}
\ddot{y} + 32.16\dot{y} + 1875y = 65.82\dot{u} - 85.89u, \label{model}
\end{align}
whose output, using the input signal generating the response in Figure \ref{experiment1}, matches the response in Figure \ref{experiment1} at the precision of $64.9\%$; see Figure \ref{model_vs_experiment}.
One can check that the sufficient conditions for exponential stability in Theorem \ref{prop2} are not satisfied by  \eqref{model} subject to \eqref{eq.9}-\eqref{eq.10}, mainly because the gain $\alpha$ is too large.
\begin{figure}
    \centering
    \includegraphics[scale=0.21]{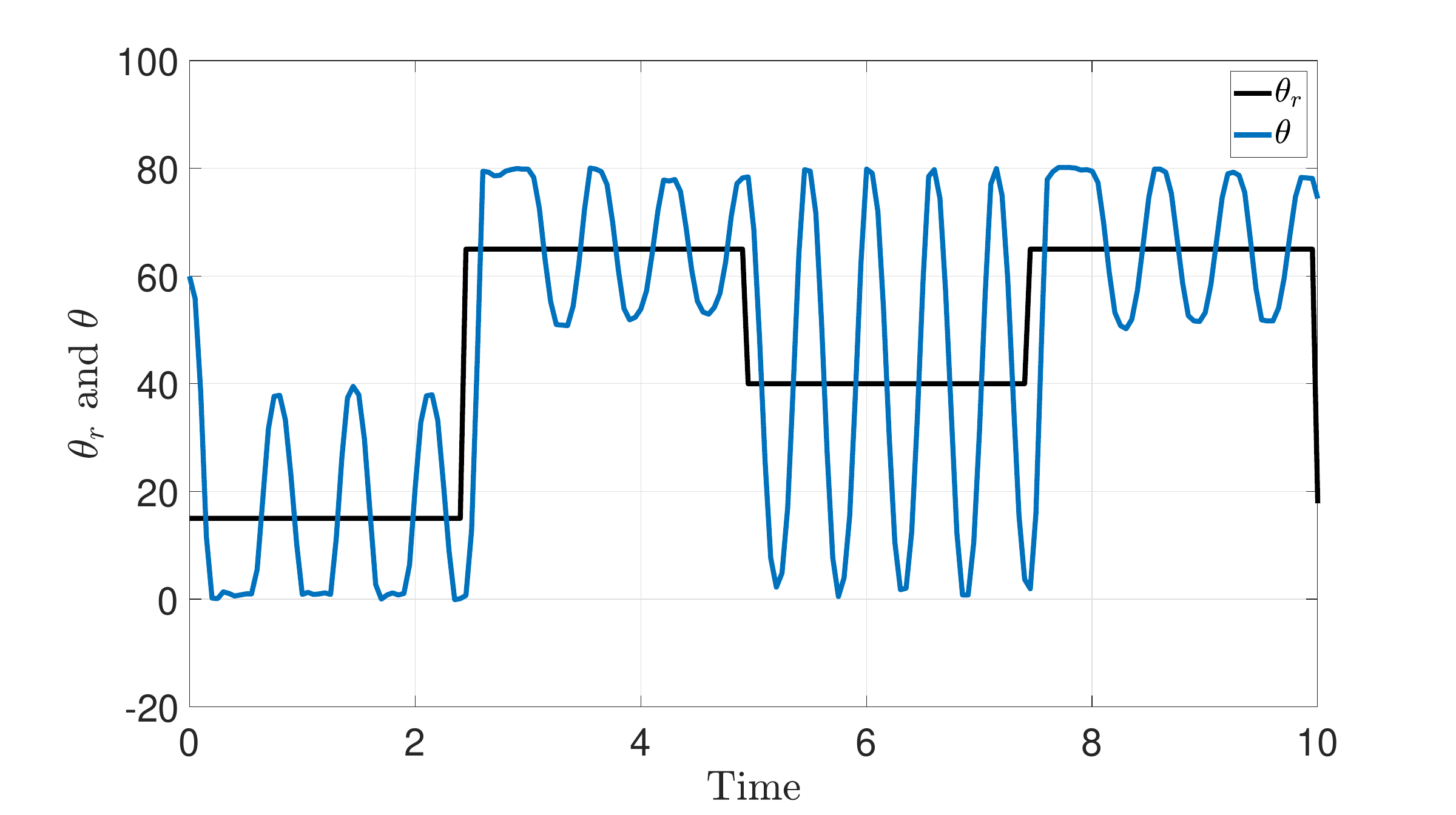}
    \caption{Valve response for $\alpha = 2000$ and $K=4000$. }
    \label{experiment1}
\end{figure}
\begin{figure}
    \centering
    \includegraphics[scale=0.21]{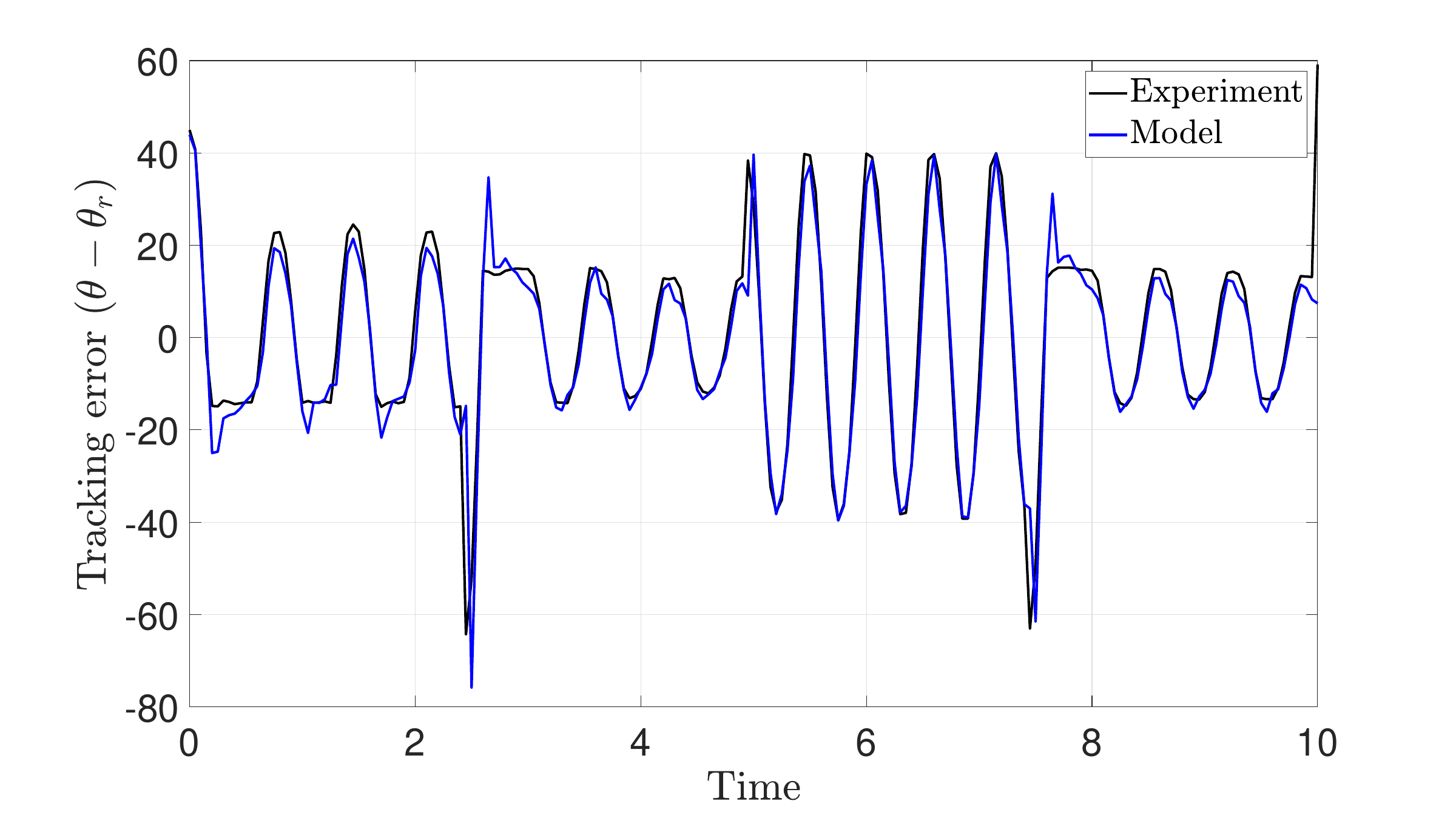}
    \caption{ Valve vs model \eqref{model} response for $\alpha = 2000$ and $K=4000$. }
    \label{model_vs_experiment}
\end{figure}
The intuition we gain from Theorem \ref{prop2} is that it is more likely to achieve exponential stability when $\alpha$ is small than when it is large, as long as the system can be described by \eqref{eq.1} with $a=b+1$. At the same time, we need to make sure that $K/\alpha$ is not too large. Following this intuition, we select $\alpha = 2.5$ and $K=5$. The corresponding valve response is shown in Figure \ref{stable_valve}, where exponential convergence of $\theta$ towards $\theta_r$ is observed.

\begin{figure}
    \centering
    \includegraphics[scale=0.21]{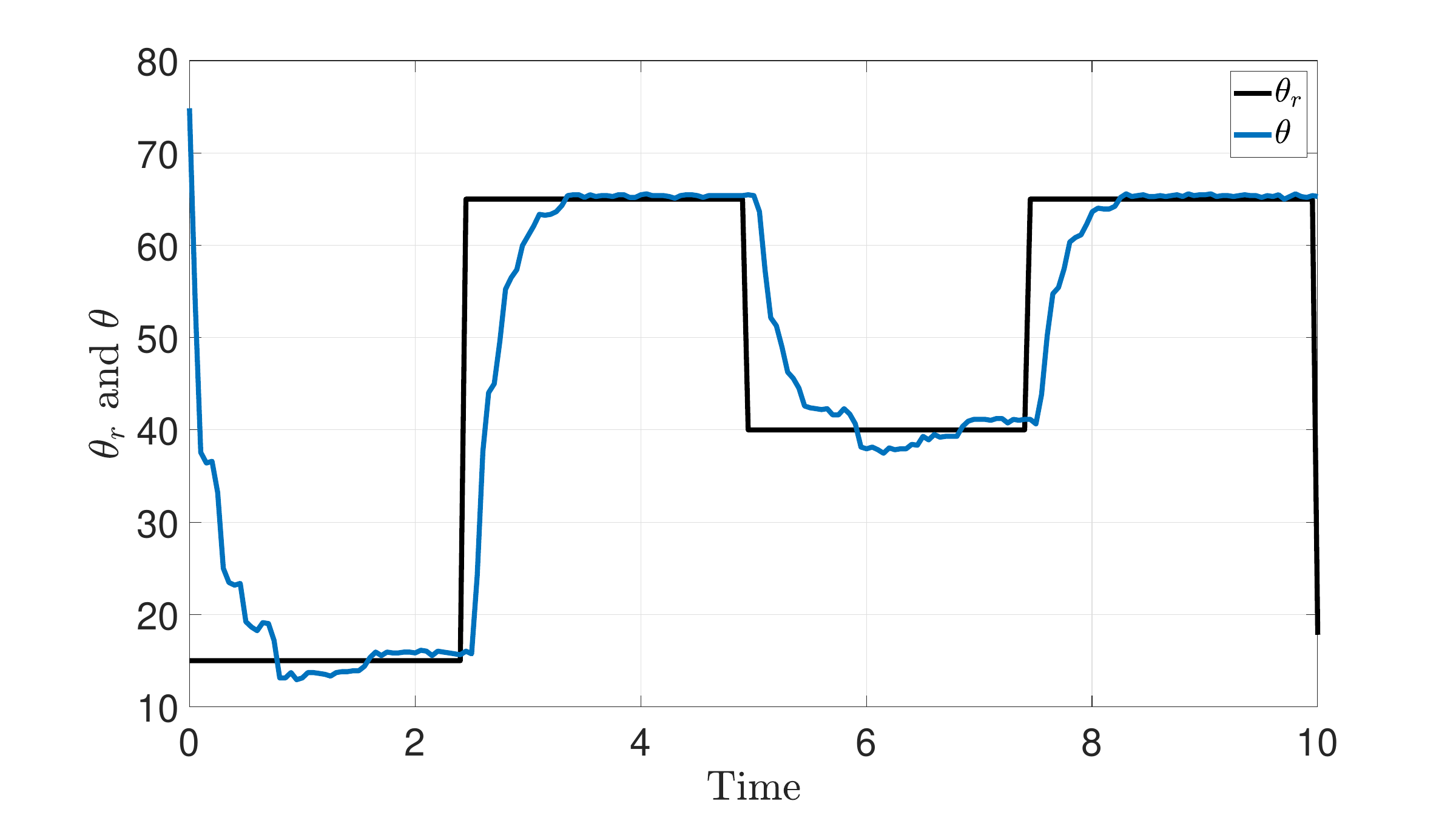}
    \caption{Valve response for $\alpha = 2.5$ and $K=5$.}
    \label{stable_valve}
\end{figure}

\section{Conclusion}
We analyzed the effect of the iP controller on the stability of linear control systems. Inspired by the literature on neutral delay and advanced-type systems, we derived sufficient conditions making the closed-loop system either unstable or not exponentially stable. Some other conditions are derived to guarantee exponential stability.  In particular, we confirm, via theory and experiment, that the iP controller in \eqref{eq.9}-\eqref{eq.10} does not yield to a closed-loop dynamics of the form $\dot{y}=-Ky$. In future work, we would like to study the effect of approximating $\dot{y}$ on the stability of the closed-loop system. 
Furthermore, we would like to consider the more general classes of \textit{intelligent PD} and \textit{intelligent PID} controllers. 

\bibliography{biblio}
\bibliographystyle{ieeetr}

\end{document}